\def\bee{\begin{equation*}}
\def\eee{\end{equation*}}
\def\aint{\frac{\ \ }{\ \ }{\hskip -0.4cm}\int}
\def\e{\epsilon}
\def\lf{\left}
\def\ijb{i\bar{j}}
\def\ri{\right}
\def\hg{\widehat{g}}
\def\a{{\alpha}}
\def\wt{\widetilde}
\def\p{\partial}
\newcommand\R{{\mathbb R}}
\newcommand\C{{\mathbb C}}
\def\jbar{{\bar\jmath}}
\def\K{K\"ahler }
\def\KR{K\"ahler-Ricci }
\def\KRF{K\"ahler-Ricci flow }
\def\A{Amp\`{e}re }
\def\be{\begin{equation}}
\def\ee{\end{equation}}
\def\lf{\left}
\def\ri{\right}
\def\a{{\alpha}}
\def\txi{\widetilde \xi}
\def\e{\epsilon}
\def\ijb{{i\jbar}}
\def\Ric{\text{\rm Ric}}
\def\wt{\widetilde}
\def\p{\partial}
\def\C{\Bbb C}
\def\wt{\widetilde}
\def\p{\partial}
\def\p{\partial}
\def\C{\Bbb C}
\def\KRF{K\"ahler-Ricci flow }
\newtheorem{thm}{Theorem}[section]
\newtheorem{lem}{Lemma}[section]
\newtheorem{prop}{Proposition}[section]
\newtheorem{cor}{Corollary}[section]
\theoremstyle{definition}
\newtheorem{defn}{Definition}[section]
\theoremstyle{remark}
\newtheorem{rem}{Remark}[section]
\numberwithin{equation}{section}
\begin{document}

 \title{\bf Longtime existence of the K\"ahler-Ricci flow on $\C^n$}\vskip .2cm
\author{Albert Chau$^1$}
\address{Department of Mathematics,
The University of British Columbia, Room 121, 1984 Mathematics
Road, Vancouver, B.C., Canada V6T 1Z2} \email{chau@math.ubc.ca}

\author{Ka-Fai Li}

\address{Department of Mathematics,
The University of British Columbia, Room 121, 1984 Mathematics
Road, Vancouver, B.C., Canada V6T 1Z2} \email{kfli@math.ubc.ca}

\author{Luen-Fai Tam$^2$}

\thanks{$^1$Research
partially supported by NSERC grant no. \#327637-06}
\thanks{$^2$Research partially supported by Hong Kong RGC General Research Fund  \#CUHK 14305114}

\address{The Institute of Mathematical Sciences and Department of
 Mathematics, The Chinese University of Hong Kong,
Shatin, Hong Kong, China.} \email{lftam@math.cuhk.edu.hk}
\thanks{\begin{it}2000 Mathematics Subject Classification\end{it}.  Primary 53C55, 58J35.}

\begin{abstract} We produce longtime solutions to
the K\"ahler-Ricci flow for complete \K metrics on $\C^n$ without assuming the initial metric has bounded curvature, thus extending results in \cite{ChauLiTam}.  We prove the existence of a longtime bounded curvature solution emerging from any complete $U(n)$-invariant \K metric with non-negative holomorphic bisectional curvature, and that the solution converges as $t\to \infty$ to the standard Euclidean metric after rescaling.  We also prove longtime existence results for more general \K metrics on $\C^n$ which are not necessarily $U(n)$-invariant.

\noindent{\it Keywords}:  K\"ahler-Ricci flow, $U(n)$-invariant \K metrics
\end{abstract}

\maketitle\markboth{Albert Chau, Ka-Fai Li and Luen-Fai Tam} {Longtime existence of the K\"ahler-Ricci flow on $\C^n$}

\section{Introduction}

   The \K Ricci flow is the following evolution equation for an initial \K metric $g_0$ on a complex manifold $M$:

 \be\label{krf}
 \left\{
   \begin{array}{ll}
     \displaystyle\frac{\partial g_{\ijb}}{\partial t} =-R_{\ijb}\\
          g(0)  = g_0.
   \end{array}
 \right.
 \ee
In this paper we establish longtime existence results for \eqref{krf} in the case $M=\C^n$.  For simplicity we will always assume $g_0$ to be smooth, although our results will only require $g_0$ to be \K and in some cases, also twice differentiable while satisfying some conditions.  We are particularly interested in complete \K metrics with non-negative bisectional curvature.

One of our main results is  (see Definition \ref{definition1} below):

\begin{thm}\label{T1}
 Let $g_0$ be a complete $U(n)$-invariant \K metric on $\C^n$ with non-negative bisectional curvature.  Then
 \begin{enumerate}
   \item [(i)] the \KRF \eqref{krf} has a smooth     longtime $U(n)$-invariant solution $g(t)$ which is equivalent to $g_0$ and has bounded non-negative bisectional curvature;
   \item [(ii)] $g(t)$ converges, after rescaling at the origin, to the standard Euclidean metric on $\C^n$;
   \item [(iii)] $g(t)$ is unique in the class $\mathcal{S}(g_0)$.
 \end{enumerate}

\end{thm}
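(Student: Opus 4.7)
The plan is to exploit the $U(n)$-invariance to reduce the \KRF \eqref{krf} to a scalar parabolic equation in a single real variable, approximate $g_0$ by smooth $U(n)$-invariant \K metrics with bounded non-negative bisectional curvature, apply the longtime existence results of \cite{ChauLiTam} to the approximations, and pass to the limit using estimates derived directly from the scalar reduction.

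A $U(n)$-invariant \K metric on $\C^n$ admits a radial potential $u(|z|^2)$, so the metric is completely encoded by a single scalar function of $r=|z|^2$ (equivalently of $s=\log r$). Non-negativity of the bisectional curvature translates into positivity/monotonicity conditions on this function and its derivatives, and \eqref{krf} becomes a scalar parabolic equation amenable to comparison arguments. First, approximate $g_0$ by a sequence of smooth $U(n)$-invariant \K metrics $g_{0,k}$ with bounded non-negative bisectional curvature converging to $g_0$ locally in $C^\infty$; this is done by standard mollification/truncation of the associated one-variable function. By \cite{ChauLiTam}, each $g_{0,k}$ admits a longtime smooth $U(n)$-invariant \KRF $g_k(t)$ with bounded non-negative bisectional curvature preserved along the flow.

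The crux is to obtain a priori estimates on $g_k(t)$ that are uniform in $k$. The maximum principle applied to the scalar equation should yield a uniform equivalence $C^{-1} g_{0,k}\leq g_k(t)\leq C g_{0,k}$ on any compact set of $\C^n$, with $C$ depending only on the time interval. Interior Shi-type derivative estimates then give curvature and higher derivative bounds, on compact sets and for each $t>0$, that are uniform in $k$. Extracting a subsequential limit with a diagonal argument produces a smooth $U(n)$-invariant solution $g(t)$ with bounded non-negative bisectional curvature which is equivalent to $g_0$, establishing (i). For (iii), once $g(t)\in\mathcal{S}(g_0)$ is verified from the estimates above, uniqueness follows from the uniqueness theorem in \cite{ChauLiTam} applied in this class. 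For (ii), I analyse the asymptotics of the normalised scalar quantity under the one-variable flow; the monotonicity imposed by non-negative bisectional curvature, together with the parabolic nature of the reduced equation, should drive the rescaled profile at the origin to the flat Euclidean profile.

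The principal obstacle is the $k$-independent estimate described above. Because $g_0$ itself may have unbounded curvature, bounds obtained from \cite{ChauLiTam} applied directly to each $g_k$ will in general blow up as $k\to\infty$ unless they are re-derived intrinsically from the scalar structure. Controlling the solution in a $k$-uniform way both at infinity (where one must prevent loss of equivalence with $g_0$) and near the origin (where the radial coordinate degenerates) through barrier constructions on the one-variable equation, while retaining preservation of the bisectional curvature sign, is where the real work of the proof lies.
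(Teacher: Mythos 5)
Your outline has a genuine circularity at its core step. You write that, after mollifying $g_0$ into bounded-curvature approximations $g_{0,k}$, ``by \cite{ChauLiTam}, each $g_{0,k}$ admits a longtime smooth $U(n)$-invariant \KRF''. No such result is available: \cite{ChauLiTam} gives short time existence for these metrics (and longtime existence only for nonpositive bisectional curvature or under special oscillation conditions on $\xi$), and even for a \emph{bounded} curvature complete metric with non-negative bisectional curvature, longtime existence is not known in general from \cite{Shi2} unless one has uniform average curvature decay. Longtime existence for the approximations is therefore essentially the statement being proved, and your scheme assumes it. The paper's actual mechanism is different and is where the content lies: in the non-cigar cases ({\bf(c1)}, {\bf(c2)}) one builds, for every $\e>0$, a comparison metric via the dilations $\phi_k(z)=z/\sqrt{k}$ whose curvature bound $K$ is \emph{independent} of $\e$ (Proposition \ref{s2p1}), and then the existence-time estimate of Lemma \ref{l-existencetimeestimate-1} gives a bounded-curvature solution up to time $\sim 1/(2nK\e)$, which tends to infinity as $\e\to 0$; in the cigar case {\bf(c3)} this construction provably fails (Proposition \ref{p-nonexitence}) and one instead bounds $-\inf F$, $F=\log(\det g(t)/\det g(0))$, by combining a Ni--Tam type estimate (Lemma \ref{l-F-bound}) with the special radial structure (Lemma \ref{l-limit-Un-5}) and the curvature decay of cigars from \cite{WZ}. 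Your proposal contains neither ingredient, and you yourself flag the missing $k$-uniform estimate as ``where the real work lies''.

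Two further points would still fail even granting the above. First, equivalence to $g_0$ and membership in $\mathcal{S}(g_0)$ require \emph{global} uniform equivalence on $\C^n$, not equivalence ``on any compact set'' as in your limiting argument; and uniqueness in $\mathcal{S}(g_0)$ cannot be quoted from \cite{ChauLiTam} (where solutions may not even be subsequence-independent); it needs a uniqueness theorem valid without curvature bounds, such as Theorem \ref{t-uniqueness-2} of this paper, applied with $\hg=g_1(1)$. Second, for part (ii) the statement that monotonicity ``should drive the rescaled profile to the flat one'' skips the actual obstructions: one needs curvature bounds on $D(R)\times[0,\infty)$ uniform in time (Lemma \ref{l-limit-Un-2}, itself resting on Lemmas \ref{l-F-bound}, \ref{l-limit-Un-5} and \cite{WZ}), the Li--Yau--Hamilton inequality \cite{Cao-1992} to propagate decay of $\mathcal{R}(0,t)$, the dichotomy on whether $\mathcal{R}(0,t)\to 0$ or $h(0,t)\to 0$, and local derivative estimates to extract a flat $U(n)$-invariant limit which is then identified with the Euclidean metric. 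As written, your argument does not close any of these steps.
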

See Corollary \ref{c-nonnegative-longtime},  Theorem \ref{t-Un-limit-1}, Theorem \ref{t-uniqueness-2} for details.  Here and throughout the rest of the paper, we use the following notation in reference to a solution $g(t)$ to \eqref{krf} on $M\times [0, T)$:

\begin{defn}\label{definition1}
$\bullet$ We say the solution is \begin{sl}smooth\end{sl} if $g(t)\in C^{\infty}(M\times (0, T) \bigcap C^0(M\times [0, T))$.

$\bullet$ We call the solution a \begin{sl}longtime solution\end{sl} if $T=\infty$.

$\bullet$ We say the solution is \begin{sl}complete \end{sl} if $g(t)$ is complete for each $t\in (0, T)$.

 $\bullet$ We say  that the solution is \begin{sl}equivalent to $g_0$\end{sl} if for all $0\leq T_1\leq t \leq T_2<T$ we have $c^{-1}g_0\leq g(t)\leq c g_0$ for some $c>0$.

$\bullet$ We say the solution has \begin{sl}non-negative bisectional curvature\end{sl} if $g(t)$ has non-negative bisectional curvature for all $t\in[0, T)$, and the solution has \begin{sl}bounded curvature\end{sl} if the curvature of $g(t)$ is uniformly bounded on $M\times[T_1, T_2]$ for all $0<T_1\leq T_2<T$.

$\bullet$ We say that a longtime solution \begin{sl}converges to $\widetilde{g}$ after rescaling at $p\in M$\end{sl} if for some $V\in T_p M$ the metrics $\frac{1}{|V|_t^2}g(t)$ converge to $\widetilde{g}$ smoothly and uniformly on compact subsets of $M$ where $|V|_t^2=g(t)(V, V)$.

$\bullet$     $\mathcal{S}(g_0)$ denotes the set of solutions of the \KR flow with initial data which is uniformly equivalent to $g_0$.
\end{defn}

 We want to remark that using the result of Cabezas-Rivas and Wilking \cite{CW}, Yang and Zheng \cite{YZ} has produced a short time existence of the \KR flow for $U(n)$ invariant initial data $g_0$ with nonnegative sectional curvature. See also \cite{HuangTam}. However, it is unclear if the solution is uniformly equivalent to $g_0$. Hence it is unclear if the solution is the same as the one constructed in the above Theorem.

 The short time existence of $g(t)$ in the above Theorem was proved by Chau-Li-Tam in \cite{ChauLiTam}, while the fact that the solution has non-negative bisectional curvature was proved by Yang-Zheng in \cite{YZ}.  The existence of complete $U(n)$-invariant \K metrics with unbounded non-negative bisectional curvature was shown in \cite{WZ} and thus, in the $U(n)$-invariant case, our results extend the longtime existence results of Shi \cite{Shi2}.  These results are all motivated by Yau's uniformization conjecture stating that: if $g_0$ is a complete non-compact \K manifold with positive holomorphic bisectional curvature then $M=\C^n$ and we refer to \cite{ChauTam2007}, and the references therein, for a survey of Yau's conjecture and the \KRF.

 In the case of non-negative bisectional curvature, the volume growth of $g_0$ is of particular importance in the proofs above.  Let $V(r)$ be the volume of the geodesic $r$ ball around the origin relative to $g_0$ having non-negative bisectional curvature.  It was shown by  Chen-Zhu \cite{CZ} that $$\frac{1}{c}r^n\leq V(r)\leq cr^{2n}$$ for some constant $c$.  Following the notation in \cite{WZ}, we say $(\C^n, g_0)$ is a conoid if    $\limsup_{r\to \infty}V(r)/r^{2n}$  is positive, and   that it  is a cigar if   $\limsup_{r\to \infty}V(r)/r^{n}<\infty$.  The case when $g_0$ is a cigar is rather special, and our arguments rely heavily on the non-negativity of bisectional curvature.  When $g_0$ is not a cigar, we may generalize the above theorem to more general \K metrics which may not have nonnegative bisectional curvature, and may not even be $U(n)$-invariant.

\begin{thm}\label{T2}
Let $g_0$ be \K metric equivalent to a $U(n)$-invariant \K metric which has non-negative bisectional curvature and is not a cigar.  Suppose $g_0$ is either $U(n)$-invariant, or else has bounded curvature.  Then the \KRF \eqref{krf} has a  unique smooth longtime solution $g(t)$ which is equivalent to $g_0$, has bounded curvature and is $U(n)$-invariant provided $g_0$ is $U(n)$-invariant.  The solution is  unique in the class $\mathcal{S}(g_0).$
\end{thm}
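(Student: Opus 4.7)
The plan is to use Theorem~\ref{T1} as a reference longtime solution and bootstrap from there. Let $\wt g_0$ denote the $U(n)$-invariant \K metric with non-negative bisectional curvature to which $g_0$ is equivalent, and let $\wt g(t)$ denote the longtime solution to \eqref{krf} with initial data $\wt g_0$ provided by Theorem~\ref{T1}. Because $\wt g_0$ is not a cigar, part (ii) of Theorem~\ref{T1} yields uniform asymptotic control of $\wt g(t)$ through its rescaled convergence to the Euclidean metric, while part (i) yields bounded non-negative bisectional curvature on every finite time interval. This reference flow will drive every subsequent estimate.

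First I would produce a short time smooth solution $g(t)$ starting from $g_0$. In the bounded curvature case this is immediate from Shi's theorem. In the $U(n)$-invariant case I would reduce \eqref{krf} to a scalar parabolic equation for a $U(n)$-invariant \K potential and, following the approximation scheme of \cite{ChauLiTam}, approximate $g_0$ by smooth $U(n)$-invariant metrics with bounded non-negative bisectional curvature, apply Theorem~\ref{T1} to each, and extract a subsequential limit to obtain $g(t)$ on some interval $[0,\tau)$, which will automatically be $U(n)$-invariant.

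The heart of the argument is the a priori estimate $c^{-1}\wt g(t)\le g(t)\le c\,\wt g(t)$ on any interval $[0,T]$ on which $g(t)$ exists. For this I would compute the evolution of the traces $\tr_{\wt g(t)} g(t)$ and $\tr_{g(t)}\wt g(t)$ under the simultaneous flows, obtaining heat-type differential inequalities whose curvature coefficients are bounded on finite intervals since $\Rm(\wt g(t))$ is bounded. The maximum principle on $\C^n$ then yields the desired equivalence, provided one controls the traces at spatial infinity; the non-cigar hypothesis on $\wt g_0$ is precisely what prevents the reference flow from exhibiting cigar-type asymptotic shrinking that would allow these traces to blow up along a sequence tending to infinity. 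Once equivalence is secured, Shi's local derivative estimates upgrade this to uniformly bounded curvature of $g(t)$ on subintervals $[T_1,T_2]\subset(0,T]$, and a standard continuity argument extends $g(t)$ to $[0,\infty)$.

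Uniqueness in $\mathcal{S}(g_0)$ follows by combining the above equivalence estimate, which places every element of $\mathcal S(g_0)$ uniformly close to $\wt g(t)$, with a Chen--Zhu style uniqueness argument for uniformly equivalent solutions with bounded curvature. I expect the main obstacle to be the equivalence estimate itself: without a bisectional curvature hypothesis on $g_0$ one cannot appeal to the barriers used in Theorem~\ref{T1}, so the non-cigar asymptotic volume growth of the reference metric must be used quantitatively to close the maximum principle argument at infinity. This is the essential structural reason the cigar case is excluded, and I expect it will require the most careful work.
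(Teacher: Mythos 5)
Your plan hinges entirely on the a priori estimate $c^{-1}\wt g(t)\le g(t)\le c\,\wt g(t)$ on each finite interval, and this step is asserted rather than proved; it is precisely the hard point, and the sketch does not contain a mechanism for it. The trace quantities you propose satisfy inequalities of the type $(\p_t-\Delta_{g(t)})\log \mathrm{tr}_{g(t)}\wt g(t)\le C\,\mathrm{tr}_{g(t)}\wt g(t)$, which is not linear in the unknown, so even formally the maximum principle does not close without coupling to a potential-type quantity; and on noncompact $\C^n$, applying any maximum principle requires control of the quantity (or of the geometry of $g(t)$) at spatial infinity near the putative maximal time, which is exactly what you do not have. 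Your appeal to the non-cigar hypothesis as "preventing blow-up of the traces at infinity" is not an argument: note also that Theorem \ref{T1}(ii) only gives convergence after rescaling at the origin, uniformly on compact sets as $t\to\infty$, so it provides no spatial asymptotic control of $\wt g(t)$ at any fixed time. In the paper the non-cigar condition (through {\bf (c1)}/{\bf (c2)}) is used in a completely different, quantitative way: Proposition \ref{s2p1} constructs, for every $\e>0$, a comparison metric $\hg_\e$ (pullbacks $\phi_k^*g$ under $\phi_k(z)=z/\sqrt k$, with a correction $o_k$ to the generating function in the {\bf (c2)} case) whose curvature bound $K$ is \emph{independent of $\e$} while $\frac1\e\hg_\e\le g(t_0)\le c(\e)\hg_\e$; Lemma \ref{l-existencetimeestimate-1} then yields a bounded-curvature solution up to time of order $1/(2nK\e)$, and Theorem \ref{t-uniqueness-2} patches these solutions as $\e\to0$. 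Nothing in your proposal plays the role of Proposition \ref{s2p1}, and your concluding "standard continuity argument" would need global curvature bounds as $t$ approaches the maximal time in order to restart the flow (Shi's local estimates take curvature bounds as input, not mere metric equivalence), so the extension step is also unsupported.

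Two further points. First, in the $U(n)$-invariant case your short-time construction approximates $g_0$ by metrics with bounded non-negative bisectional curvature and applies Theorem \ref{T1} to each; but in Theorem \ref{T2} the metric $g_0$ itself need not have non-negative bisectional curvature (only the comparison metric does), so this scheme is not available; the paper instead quotes the short-time results of Chau--Li--Tam (Theorems 5.4 and 4.2 of \cite{ChauLiTam}), respectively Shi in the bounded-curvature case. Second, uniqueness in $\mathcal{S}(g_0)$ cannot be deduced from a Chen--Zhu argument, since members of $\mathcal{S}(g_0)$ are not assumed to have bounded curvature; the paper's route is Theorem \ref{t-uniqueness-2}, which needs only uniform equivalence to a reference metric (e.g.\ $\hg=g_1(1)$) admitting an exhaustion function with bounded gradient and complex Hessian, and that is the argument you should use here.
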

We refer to Theorems \ref{t-Un-longtime-2} and \ref{t-Un-longtime-2-2} for details where this and more general results are proved.

 We point out that when $n=1$, there always exists a longtime solution to \eqref{krf} by the results of Giesen-Topping \cite{GT}.  In fact, it is shown there that given any non-compact Riemann surface $(M, g_0)$, the \K Ricci flow \eqref{krf} admits a smooth complete longtime solution $g(t)$ with unbounded curvature for all $t>0$.  Shorttime and longtime existence results for the Ricci flow starting from complete non-compact Riemannian manifolds $(M, g_0)$ with unbounded curvature have appeared in the works \cite{CW, GT, KL, SS1, SS2}. We also point out that in \cite{CW} examples of immortal solutions $g(t)$ (defined for all $t\in \R$) were constructed in \cite{CW}.  We refer to \cite{To} for a survey of related results.

 The outline of the paper is as follows.  In \S 2 we establish a uniqueness result for \eqref{krf} on general \K manifolds. In section \S 3 we prove our main longtime existence results on $\C^n$ (Theorems \ref{t-Un-longtime-2} and \ref{t-Un-longtime-2-2}) including our longtime existence result for $U(n)$-invariant \K metrics with non-negative bisectional curvature (Corollary \ref{c-nonnegative-longtime}).  In \S 4 we prove our convergence result for longtime $U(n)$-invariant solutions with non-negative bisectional curvature (Proposition \ref{p-limit-1}). In the appendix, we collect some basic facts of $U(n)$-invariant \K metrics on $\C^n$ for easy reference.

\section{Uniqueness}

Our first result is on the uniqueness of solutions to the \KR flow \eqref{krf}. It is well-known that the Ricci flow on complete noncompact Riemannian manifold is unique under the assumption that the curvature is bounded, see \cite{CZ2}.  For the \KR flow, it is more easy to obtain uniqueness. The following is a generalization of the result in \cite{F}. Here we do not assumption the curvature is bounded, and we do not assume that the Ricci form has a potential, which is assumed in \cite{F}.

\begin{thm}\label{t-uniqueness-2}
Let $(M^n,\widehat g)$ be a complete noncompact \K manifold.
Suppose
 there is an exhaustion function $\zeta>0$  on $(M^n,\widehat g)$ with $\lim_{x\to\infty}\zeta(x)=\infty$ such that $|\p\bar\p\zeta|_{\hg}$ and $|\widehat\nabla \zeta|_{\hg}$ are bounded.

 Let $g_1(x,t)$ and $g_2(x,t)$ be two solutions of the \KR flow \eqref{krf} on $M\times[0,T]$ with the same initial data $g_0(x)=g_1(x,0)=g_2(x,0)$. Suppose there is a positive function $\sigma$ with $\lim_{x\to\infty}\log\sigma(x)/\log\zeta(x)=0$ such that the following conditions hold for all $(x,t)\in M\times[0,T]$:

  \begin{enumerate}
     \item [(i)]
 \be\label{eq-bound-2}
  \hg(x) \le \zeta(x) g_1(x,t); \ \   \hg(x) \le  \zeta(x) g_2(x,t),
\ee

     \item [(ii)]
     \bee
     -\sigma(x)\le    \frac{\det((g_1)_\ijb (x,t))}{\det(  (g_2)_\ijb (x,t))}\le \sigma(x).
     \eee

   \end{enumerate}
   Then $g_1\equiv g_2$ on $M\times[0,T]$. In particular, if $g_1$ and $g_2$ are uniformly equivalent to $\hg$ on $M\times[0,T]$, then $g_1\equiv g_2$.
 \end{thm}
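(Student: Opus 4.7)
My strategy is to reduce the uniqueness question to proving that a scalar potential $\varphi$ with $\varphi(\cdot,0)\equiv 0$ is identically zero, and then to run a two-sided maximum principle on the noncompact manifold using $\log\zeta$ as a barrier. After replacing $\zeta$ by $\zeta+c$ for a sufficiently large constant $c$ (which preserves all hypotheses), I may assume $\zeta\ge 1$ on $M$.

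Since $\Ric=-\ddbar\log\ddet(g_{k\bar l})$ for a \K metric, subtracting the two instances of \eqref{krf} gives
\bee
\p_t\bigl((g_1)_\ijb-(g_2)_\ijb\bigr)=\p_i\p_{\bar j}\psi,\qquad \psi(x,t):=\log\frac{\ddet(g_1)_\ijb}{\ddet(g_2)_\ijb}.
\eee
Setting $\varphi(x,t):=\int_0^t\psi(x,s)\,ds$, the equal initial data give $(g_1)_\ijb-(g_2)_\ijb=\p_i\p_{\bar j}\varphi$, $\p_t\varphi=\psi$, and $\varphi(\cdot,0)\equiv 0$. Condition (ii) yields $|\psi|\le\log\sigma$, so $|\varphi(x,t)|\le T\log\sigma(x)=o(\log\zeta(x))$ at infinity. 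The concavity of $\log\ddet$ on positive Hermitian matrices, expanded around $g_2$ and around $g_1$ respectively, then gives the two-sided bound
\bee
\Delta_{g_1}\varphi\;\le\;\p_t\varphi\;\le\;\Delta_{g_2}\varphi,
\eee
using $\p_i\p_{\bar j}\varphi=(g_1)_\ijb-(g_2)_\ijb$. In other words, $\varphi$ is simultaneously a subsolution of the $g_2$-heat equation and a supersolution of the $g_1$-heat equation.

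To obtain $\varphi\le 0$, I would introduce $\varphi_\e:=\varphi-\e\log\zeta-\e Ct$ on $\{\zeta\le K\}\times[0,T]$ for small $\e>0$, and large $C,K$ to be chosen. Working in a local $\hg$-orthonormal frame, assumption (i) gives $g_2^{-1}\le\zeta\cdot I$ as Hermitian matrices, and combined with the uniform bounds on $|\ddbar\zeta|_{\hg}$ and $|\wh\nabla\zeta|_{\hg}$ this yields $|\Delta_{g_2}\zeta|\le C_0\zeta$ and $(g_2)^{i\bar j}\p_i\zeta\,\p_{\bar j}\zeta\le C_0\zeta$. Therefore $|\Delta_{g_2}\log\zeta|\le C_0+C_0/\zeta\le C_1$ uniformly on $\{\zeta\ge 1\}\times[0,T]$, so picking $C>C_1$ forces $(\p_t-\Delta_{g_2})\varphi_\e\le 0$. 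On the parabolic boundary of $\{\zeta\le K\}\times[0,T]$: at $t=0$ one has $\varphi_\e=-\e\log\zeta\le 0$, while on $\{\zeta=K\}$ the hypothesis $\log\sigma/\log\zeta\to 0$ implies $T\log\sigma(x)<\tfrac{\e}{2}\log K$ for $K$ large enough, so $\varphi_\e\le T\log\sigma-\e\log K<-\tfrac{\e}{2}\log K<0$ there. The classical parabolic maximum principle on the compact region $\{\zeta\le K\}\times[0,T]$ thus forces $\varphi_\e\le 0$ throughout, and letting $K\to\infty$ then $\e\to 0$ gives $\varphi\le 0$. The reverse inequality $\varphi\ge 0$ follows identically from $(\p_t-\Delta_{g_1})\varphi\ge 0$ together with the symmetric bound $\hg\le\zeta g_1$ from (i). Hence $\varphi\equiv 0$ and $g_1\equiv g_2$. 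The final assertion is the special case where $\sigma$ can be taken constant, after possibly replacing $\zeta$ by a positive multiple so that $\hg\le\zeta g_i$ holds.

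The main obstacle is the last step: no curvature or uniform ellipticity estimate is assumed for either $g_i(t)$, so standard parabolic theory does not apply to the operators $\p_t-\Delta_{g_i}$. The role of assumption (i) is precisely to make $\log\zeta$ into a usable barrier — the matrix inequality $\hg\le\zeta g_i$ converts the $\hg$-bound on $\ddbar\zeta$ into a bound on $\Delta_{g_i}\zeta$ that grows linearly in $\zeta$, which is exactly what is required for $\Delta_{g_i}\log\zeta$ to be bounded, and it is this boundedness — together with the sub-logarithmic growth of $\varphi$ coming from assumption (ii) — that makes the barrier argument close.
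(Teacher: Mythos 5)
Your proof is correct and follows essentially the same route as the paper: both reduce to the scalar potential $\varphi(x,t)=\int_0^t\log\frac{\det(g_1)_\ijb}{\det(g_2)_\ijb}\,ds$, whose complex Hessian equals $(g_1)_\ijb-(g_2)_\ijb$, and then force $\varphi\equiv 0$ by a maximum principle with $\log\zeta$ as barrier, with hypothesis (i) giving the uniform bound on $\Delta_{g_i}\log\zeta$ and hypothesis (ii) giving $\varphi=o(\log\zeta)$ at infinity. The only (harmless) deviation is technical: the paper derives the exact identity $\p_t w=\int_0^1\Delta_{h(s,t)}w\,ds$ for the interpolated metrics $h(s,t)=sg_1+(1-s)g_2$ and uses the barrier $e^{At}\log\zeta$, whereas you use the two concavity inequalities $\Delta_{g_1}\varphi\le\p_t\varphi\le\Delta_{g_2}\varphi$ with an additive barrier $\e(\log\zeta+Ct)$, which is an equivalent device.
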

 \begin{proof} By adding a positive constant to $\zeta$ we may assume that  $\eta:=\log \zeta>1$. Then
\bee
\eta_\ijb=\frac{\zeta_{\ijb}}{\zeta}-\frac{\zeta_i\zeta_{\bar j}}{\zeta^2}.
\eee
Since $|\p\bar\p\eta|_{\hg}$ and $|\widehat\nabla \eta|_{\widehat g}$ are uniformly bounded, there is $c_1>0$ such that
\bee
|\p\bar\p\eta|_{\hg}\le \frac{c_1}{\zeta}
\eee
on $M$.
Let $h(s,t)(x)=sg_1(x,t)+(1-s)g_2(x,t)$, $0\le s\le 1$. By (i), $\hg(x)\le\zeta(x)h(s,t)(x,t)$ for all $(x,t)\in M\times[0,T]$ and for all $s$.  Let $(x,t)$ be fixed and diagonalize $\p\bar\p \eta$ with respect to $\hg$ at $x$. Then $|\eta_{i\bar i}|\le \frac{c_1}{\zeta}$. On the other hand,
\be\label{e-unique-1-1}
\Delta_{h(s,t)}\eta=(h(s,t))^{\ijb}\eta_{\ijb}=(h(s,t))^{i\bar i} \eta_{i\bar i}\le n\zeta\cdot\frac{c_1}{\zeta}=nc_1.
\ee
  Let
 $$
 w(x,t)=\int_0^t\lf(\log \frac{\det((g_1)_\ijb (x,s))}{\det(  (g_2)_\ijb (x,s))} \ri)ds.
 $$
Then
 \bee
 \begin{split}
 w_\ijb(x,t)=&\int_0^t\lf(  ( {R}_1)_\ijb (x,s)-( {R}_2)_\ijb (x,s)\ri)ds=-(g_1)_{\ijb}(x,t)+(g_2)_\ijb(x,t)
 \end{split}
 \eee
 where $ (R_k)_\ijb$ is the Ricci tensor  of $ g_k$, $k=1,2$. Here we have used the \KR flow and the fact that $g_1=g_2$ at $t=0$. Hence in order to prove the proposition, it is sufficient to prove that $w\equiv0$.
 Now
 \be
 \begin{split}
 \frac{\p }{\p t}w(x,t)=&\int_0^1\frac{\p}{\p s}\log\det (h_\ijb(s,t)(x)))ds\\
 =&\int_0^1\Delta_{h(t,s)}w(x,t)ds.
\end{split}
 \ee
Let $W(x,t)=e^{At}\eta$ where $A=nc_1+1 $. By \eqref{e-unique-1-1},

\bee
 \begin{split}
 \frac{\p}{\p t}W(x,t)-\int_0^1\Delta_{h(t,s)}W(x,t)ds\ge&e^{At}(A\eta-nc_1) \\
 \ge &e^{At}\eta
 \end{split}
 \eee
where we have used the fact that $\eta>1$. For any $\e>0$,
\bee
 \begin{split}
 \frac{\p}{\p t}\lf(\e W-w\ri)(x,t) -\int_0^1\Delta_{h(t,s)}\lf(\e W-w\ri)(x,t)ds
 \ge &e^{At}\eta\\
 \end{split}
 \eee
 By (ii), $\lim_{x\to\infty}(\e W-w)(x,t)= \infty$ uniformly in $t$. By the maximum principle, we conclude that $w\le \e W$. Letting $\e\to0$ gives $w\le 0$. Similarly, one can prove that $-w\le 0$ and hence $w\equiv0$ on $M\times[0,T]$. This completes the proof of the proposition.
 \end{proof}

\begin{rem}  \begin{enumerate}
                 \item [(i)] Suppose $\widehat g$ has bounded curvature, then $\zeta$ exists by \cite{Shi2}, see also \cite{T}.
                 \item [(ii)]  Suppose $\widehat g$ has nonnegative Ricci curvature and nonnegative quadratic bisectional curvature, then $\zeta$ exists by \cite{NiTam2013}.
                 \item [(iii)] In particular, the solution constructed in Theorem 4.2 in \cite{ChauLiTam} does not depend on the subsequence chosen.
               \end{enumerate}
 \end{rem}

 \section{Long time existence}

 In this section, we discuss the longtime existence of solutions to \KR flow \eqref{krf} starting from $U(n)$-invariant \K metrics on $\C^n$.  We will make extensive use of the notation and results in Theorem \ref{appendixthm} (see appendix) on $U(n)$-invariant \K metrics $g$ and their associated auxiliary functions $\xi, h, f$, and we refer there for details.  We also recall the notation in definition \ref{definition1}.

 Suppose $g$ is a complete $U(n)$-invariant \K metric on $\C^n$ with nonnegative bisectional curvature.  Then the scalar curvature $\mathcal{R}$ satisfies \cite{WZ}:
 $$
 \frac1{V_g(x,\rho)}\int_{B_g(x,\rho)}\mathcal{R}\le \frac{c(x)}{1+\rho}
 $$
 for all $\rho>0$ for some constant $c(x)$ which may depend on $x$. Note that if we can choose $c(x)=c$ which is independent of $x$ and the curvature of $g$ is bounded, then it is well known \cite{Shi2} (see also \cite{NiTam2004}) that the \KR flow has a longtime solution starting from $g$.  In our   main  result below,  Theorem \ref{t-Un-longtime-2}, we do not assume that the curvature of $g$ is bounded, and $c(x)$ may in general depend on $x$.    In fact, we will prove longtime existence of the flow under more general conditions than nonnegative bisectional curvature (see {\bf(c1)}, {\bf(c2)}, {\bf(c3)} below). In order to obtain long time existence, we make use the following  basic lemma from \cite[Corollary 4.2]{ChauLiTam}, see also the proof of \cite[Theorem 4.2]{ChauLiTam}.

 \begin{lem}\label{l-existencetimeestimate-1}
 Let $M^n$ be a complex noncompact manifold and let $g_0$, $\hg$ be complete \K metrics with  bounded curvature on $M$. Suppose the holomorphic bisectional curvature of $\hg$ is bounded above by $K$ and that  $(1/\e)\widehat{g}\le g_0\le C\widehat{g}$ for some $C\ge1$.  Let $T=1/2nK\e$ if $K>0$, otherwise let $T=\infty$.  Then the \KR flow \eqref{krf} with $g_0$ as initial data has a smooth solution $g(t)$ on $M\times[0,T)$ which has bounded curvature and satisfies
 \be
     \lf(\frac1n- 2 \e Kt\ri)\hg\le g(t)\le B(t)\hg
 \ee
 for all $t\in [0, T)$ for some positive continuous function $B(t)$ on $[0,T)$.
 \end{lem}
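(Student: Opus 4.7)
The plan is to combine Shi's short time existence theorem for the \KR flow with a Chern--Lu--Royden type maximum principle estimate. Since both $g_0$ and $\hg$ have bounded curvature, Shi's theorem supplies a smooth solution $g(t)$ to \eqref{krf} with $g(0)=g_0$ on some maximal interval $[0,T_{\max})$, having curvature uniformly bounded on every compact subinterval. To establish the lemma it suffices to prove the two-sided comparison bound with $\hg$ on $[0,T)$; Shi's higher order derivative estimates combined with that comparison then preclude curvature blow-up and give $T_{\max}\ge T$, while the bounds themselves are exactly the content of the statement.

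For the lower bound, introduce the trace quantity
\begin{equation*}
\varphi(x,t):=\tr_{g(t)}\hg=g^{i\bar j}(t)\,\hg_{i\bar j}.
\end{equation*}
A standard Chern--Lu computation, exploiting that the holomorphic bisectional curvature of $\hg$ is bounded above by $K$, yields an inequality of the form
\begin{equation*}
\lf(\frac{\p}{\p t}-\Delta_{g(t)}\ri)\log\varphi\le K\,\varphi,
\end{equation*}
the Ricci contribution from $g(t)$ being absorbed by $\p_t$ under the flow. The hypothesis $(1/\e)\hg\le g_0$ gives $\varphi(\cdot,0)\le n\e$ pointwise. Applying the maximum principle on $M$, justified by the bounded curvature of $g(t)$ on compact time intervals together with a Shi-type exhaustion function controlling $\varphi$ at spatial infinity, leads to an ODE-type bound on $\sup_M\varphi(\cdot,t)$. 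Converting back via simultaneous diagonalization of $g$ and $\hg$ (which gives $g\ge\varphi^{-1}\hg$ after tracking the dimensional constants) produces the stated linear-in-$t$ lower bound $\lf(\tfrac{1}{n}-2\e Kt\ri)\hg\le g(t)$ on $[0,T)$.

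For the upper bound, the lower bound just obtained shows $g(t)$ is uniformly equivalent to $\hg$ on every compact subinterval of $[0,T)$, and Shi's local derivative estimates then bound $|\Ric_{g(t)}|$ uniformly there. Integrating $\p_tg_{i\bar j}=-R_{i\bar j}$ in $t$ and combining with $g_0\le C\hg$ produces a continuous function $B(t)$ on $[0,T)$ with $g(t)\le B(t)\hg$, as required.

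The main obstacle is the rigorous use of the maximum principle for the Chern--Lu inequality on the noncompact manifold: one must control $\varphi$ near spatial infinity on each time slice, and this is where the bounded curvature hypothesis on $g(t)$ enters crucially, producing the exhaustion function \`a la Shi that is needed to make the reduction to an ODE legitimate. Once that is in place, the remaining task is careful bookkeeping of the constants together with the standard continuation criterion that propagates the solution, with the stated bounds, all the way to $t=T$.
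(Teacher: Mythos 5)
The paper does not actually prove this lemma; it is imported from \cite[Corollary 4.2 and proof of Theorem 4.2]{ChauLiTam}, and your skeleton (Shi's short-time solution, a Chern--Lu estimate for $\tr_{g(t)}\hg$ using only the upper bisectional bound of $\hg$, a maximum principle justified by an exhaustion function, then continuation up to $T$) is indeed the strategy behind that result, and your lower-bound step is fine in outline. The genuine gap lies in your upper bound and continuation steps, and it is the same error twice: you assume that metric bounds relative to $\hg$ can be fed into ``Shi's (local) derivative estimates'' to produce Ricci or curvature bounds. Shi's estimates take a curvature bound as input and control its derivatives; they do not convert uniform equivalence with $\hg$ into a bound on $|\Ric_{g(t)}|$ or $|\Rm_{g(t)}|$. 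Moreover, your claim that the lower bound alone shows $g(t)$ is uniformly equivalent to $\hg$ on compact subintervals is false --- it is one-sided. As written, your $B(t)$ can only come from the curvature bound that Shi's solution happens to have on compact subintervals of its maximal interval, so it depends on the unknown curvature of $g(t)$ and cannot rule out blow-up before $T=1/2nK\e$; but an existence time depending only on $n,K,\e$ (not on $\sup|\Rm(g_0)|$) is precisely the point of the lemma.

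What is missing is (i) an a priori upper bound independent of the curvature of $g(t)$: evolve the volume ratio $u=\log\lf(\det g(t)/\det \hg\ri)$, which satisfies $(\p_t-\Delta_{g(t)})u=-\tr_{g(t)}\Ric(\hg)$, so that $|(\p_t-\Delta_{g(t)})u|\le c\,\tr_{g(t)}\hg$ using the full bounded curvature of $\hg$ (not just its upper bisectional bound); the maximum principle together with $g_0\le C\hg$ bounds $u$, and combining this determinant bound with the Chern--Lu eigenvalue lower bound yields $g(t)\le B(t)\hg$ with $B$ depending only on $n,K,\e,C$, the curvature bound of $\hg$ and $t$; and (ii) a local higher-order estimate valid under two-sided metric bounds alone --- an Evans--Krylov/Calabi-type interior estimate such as \cite[Theorem 2.2]{ChauLiTam}, which this paper invokes elsewhere --- rather than Shi's derivative estimates, to convert (i) plus the lower bound into uniform curvature bounds up to the maximal Shi time; one then restarts the flow by Shi's theorem and reruns the a priori estimates on the extended interval until $t$ reaches $T$. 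With these two repairs (and the bookkeeping that turns $\sup_M\tr_{g(t)}\hg\le n\e/(1-cn\e Kt)$ into the stated linear lower bound), your outline becomes essentially the argument of \cite{ChauLiTam}.
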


 For a given \K metric $g$ and constant $\epsilon>0$, it is very difficult in general to construct a metric $\widehat{g}$ satisfying the conditions in Lemma \ref{l-existencetimeestimate-1} such that $K$ is independent of $\epsilon$.  When $g$ is a $U(n)$-invariant metric generated by $\xi$ (see Appendix), we show this is possible when {\bf(c1)} or {\bf (c2)} below holds (see Proposition \ref{s2p1}), though may not be possible  when {\bf (c3)} below holds (see Proposition \ref{p-nonexitence}).

\begin{description}
  \item[(c1)]  There exist $0\le \a\le\beta<1$ and $\gamma$ such that for all $0<a<r$
  \bee
  \int_a^r\frac{\a-\xi}sds, \ \ \int_a^r\frac{\xi-\beta}sds\le \gamma.
  \eee
  \item[(c2)]   $\lim_{r\to\infty}\xi(r)=1$,  $
 \int_1^\infty\frac{1-\xi(s)}sds=\infty
  $ and there exists  $\delta>0$ such that for all $0<a<r$
  \bee
  \int_a^r\frac{1-\xi}sds \ge -\delta.
  \eee

  \item[(c3)] $\lim_{r\to\infty}\xi(r)=1$, and
  $
  \int_1^\infty\frac{1-\xi(s)}sds<\infty.
  $
\end{description}
In \cite[Theorem 5.4]{ChauLiTam} it was proved that if $\xi$ satisfies {\bf (c1)} with $\beta\le 1$, then the \KR flow has short time existence with initial metric $g$.
 It was also proved in \cite{ChauLiTam} that if the bisectional curvature is nonpositive, then we have long time solutions. In case the bisectional curvature of $g$ is nonnegative, then it satisfies either {\bf (c1)} with  $\a=0$ and $ \beta=\xi(\infty)$, {\bf (c2)}  or {\bf (c3)} by Theorem \ref{appendixthm} (c), where {\bf (c3)} is the case of a cigar and {\bf (c1)} is the case of a conoid. So the conditions are weaker than the assumption that the bisectional curvature is nonnegative.

The main result of this section is:

\begin{thm}\label{t-Un-longtime-2} Let $g_0$ be a complete \K metric on $\C^n$ satisfying one of the following:
\begin{enumerate}
\item [(i)] $g_0$ is $U(n)$-invariant satisfying {\bf (c1)} or {\bf (c2)},
\item [(ii)] $g_0$ has bounded curvature and is equivalent to a $U(n)$-invariant metric satisfying {\bf (c1)} or {\bf (c2)},  or
\item [(iii)] $g_0$ is $U(n)$-invariant satisfying {\bf (c3)}, and has non-negative bisectional curvature.
\end{enumerate}
Then the \KR flow \eqref{krf} with $g_0$ as initial data has a  smooth longtime solution $g(t)$ which is equivalent to $g_0$, has bounded curvature and is $U(n)$-invariant provided $g_0$ is  $U(n)$-invariant.  Moreover  $g(t)$ is unique in the class $\mathcal{S}(g_0)$.

\end{thm}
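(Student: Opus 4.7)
The plan is to prove existence in each of the three cases by reducing to Lemma \ref{l-existencetimeestimate-1}, after constructing a suitable $U(n)$-invariant background metric $\widehat g$, and then to obtain uniqueness from Theorem \ref{t-uniqueness-2}.

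For cases (i) and (ii) the key step is to produce a complete $U(n)$-invariant \K metric $\widehat g$ on $\C^n$ whose holomorphic bisectional curvature is bounded above by some $K\le 0$, and which is uniformly equivalent to the $U(n)$-invariant metric in the equivalence class of $g_0$. I would specify $\widehat g$ through its auxiliary function $\widehat\xi$ from Theorem \ref{appendixthm}: under \textbf{(c1)} one takes $\widehat\xi$ valued in $[\alpha,\beta]$ and equal to $\beta$ outside a large ball, while under \textbf{(c2)}, exploiting $\xi\to 1$ and $\int_1^{\infty}(1-\xi)s^{-1}\,ds=\infty$, one takes $\widehat\xi$ tending to $1$ at infinity at a controlled rate. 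The appendix formulas then give $\widehat g$ bounded curvature with $K\le 0$, while the integral bounds built into \textbf{(c1)}, \textbf{(c2)} force comparability of the generators $h$ and $\widehat h$, yielding $(1/\epsilon)\widehat g\le g_0\le C\widehat g$. Applying Lemma \ref{l-existencetimeestimate-1} with $K\le 0$ produces a smooth bounded-curvature longtime solution $g(t)$ uniformly equivalent to $\widehat g$, hence to $g_0$; case (ii) is handled identically, with the bounded-curvature hypothesis on $g_0$ needed to verify the hypotheses of the lemma for $g_0$ directly rather than through $\widehat g$.

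Case (iii) is the cigar regime and is the main obstacle, since Proposition \ref{p-nonexitence} shows that no bounded-curvature $\widehat g$ equivalent to $g_0$ exists here; the argument must be intrinsic and will use nonnegativity of bisectional curvature crucially. I would approximate $g_0$ by a sequence $g_0^{(k)}$ of smooth complete $U(n)$-invariant \K metrics with bounded nonnegative bisectional curvature, obtained by suitably truncating or mollifying the auxiliary $\xi$ near infinity. By Shi \cite{Shi2}, each generates a bounded-curvature longtime solution $g^{(k)}(t)$ preserving nonnegative bisectional curvature. The crucial step is uniform-in-$k$ equivalence of $g^{(k)}(t)$ with $g_0^{(k)}$ on each compact time-space set; this I would extract from the averaged scalar-curvature decay of \cite{WZ} together with a maximum-principle argument applied to the evolution of $\log\det g^{(k)}_{i\bar j}$ relative to $g_0^{(k)}$. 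A standard diagonal-subsequence extraction using Shi-type higher-order estimates then yields a $C^\infty_{\mathrm{loc}}$ limit $g(t)$, a smooth longtime solution equivalent to $g_0$ with bounded curvature on $[T_1,T_2]\subset(0,\infty)$.

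Uniqueness in each case follows from Theorem \ref{t-uniqueness-2}. Given $\widetilde g\in\mathcal{S}(g_0)$, it is uniformly equivalent to $g_0$, and hence to $\widehat g$ in cases (i), (ii); the bounded curvature of $\widehat g$ supplies the required exhaustion $\zeta$ via Remark (i) following Theorem \ref{t-uniqueness-2}. In case (iii), the nonnegative Ricci curvature of $g_0$ itself supplies $\zeta$ via Remark (ii). In all cases $\sigma$ may be taken constant, so Theorem \ref{t-uniqueness-2} gives $g\equiv\widetilde g$. When $g_0$ is $U(n)$-invariant, $\phi^*g(t)$ is also a solution in $\mathcal{S}(g_0)$ with the same initial data for every $\phi\in U(n)$, so uniqueness forces $U(n)$-invariance of $g(t)$. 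The primary difficulty throughout is the uniform compactness argument in case (iii): the cigar geometry admits no bounded-curvature comparison metric, so the required $C^0$ equivalence of $g^{(k)}(t)$ must be extracted from the intrinsic volume-growth and curvature-averaging features of nonnegatively curved $U(n)$-invariant metrics, and propagating these estimates uniformly in $k$ up to arbitrary finite times is where the argument is most delicate.
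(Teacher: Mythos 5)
Your reduction of cases (i) and (ii) to Lemma \ref{l-existencetimeestimate-1} rests on a claim that fails: you assert one can build a $U(n)$-invariant $\widehat g$ uniformly equivalent to $g_0$ whose bisectional curvature is bounded above by some $K\le 0$, so that $T=\infty$ in the lemma. By the remark after Theorem \ref{appendixthm}, nonpositive bisectional curvature forces $\widehat\xi'\le 0$, hence $\widehat\xi\le 0$ and $\widehat h$ nondecreasing, so $\widehat h\ge \widehat h(0)>0$. But under {\bf (c2)} one always has $h(r)=h(1)r^{-1}\exp\bigl(\int_1^r\frac{1-\xi}{s}ds\bigr)\to 0$ (since $1-\xi\to 0$ makes the exponent $o(\log r)$), and under {\bf (c1)} with $\alpha>0$ (or with $\int\xi/s\to\infty$, e.g.\ the nonnegatively curved conoids) $h(r)\le Cr^{-\alpha}\to0$; by Remark \ref{r-comparison} such a $g_0$ cannot be uniformly equivalent to any nonpositively curved $U(n)$-invariant metric. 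Moreover the specific $\widehat\xi$ you propose (equal to $\beta$, resp.\ tending to $1$, at infinity) has $\widehat\xi'\ge 0$ somewhere and hence does not have $K\le0$, and under {\bf (c1)} with $\alpha<\beta$ it is not even two-sidedly equivalent to $g_0$ (only $h\ge c\widehat h$ follows; $h/\widehat h$ can grow like $r^{\beta-\alpha}$). The actual mechanism in the paper is different: Proposition \ref{s2p1} produces, for each $\e$, a comparison metric (a rescaling $\phi_k^*g$, modified in the {\bf (c2)} case by an oscillating correction $o_k$) whose curvature bound $K$ is \emph{independent of} $\e$ but which satisfies only the one-sided, $\e$-improving bound $(1/\e)\widehat g_\e\le g_0\le c(\e)\widehat g_\e$; longtime existence then comes from letting $\e\to0$ so that $T_\e=1/2nK\e\to\infty$, combined with uniqueness to patch the solutions. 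You also overlook that in case (i) $g_0$ may have unbounded curvature, so Lemma \ref{l-existencetimeestimate-1} cannot be applied to $g_0$ directly; the paper first produces a short-time solution with curvature bounded for $t>0$ (via \cite{ChauLiTam}) and applies the comparison at a time $t_0>0$.

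In case (iii) your outline leaves precisely the crucial step unproved. Approximating by bounded-curvature metrics and invoking Shi is fine, but the uniform-in-$k$ (or, in the paper's formulation, uniform-in-$t$ up to any finite $T$) control of $F=\log\det g(t)/\det g_0$ cannot come from a straightforward maximum principle plus the Wu--Zheng decay, because for a cigar the averaged curvature decay constant $c(x)$ depends on the center $x$, so the Ni--Tam estimate (Lemma \ref{l-F-bound}) cannot be applied at arbitrary points. The paper closes the argument with Lemma \ref{l-limit-Un-5}: the cigar condition $\int_1^\infty\frac{1-\xi}{s}ds<\infty$ yields $F(r,t)\ge -c+nF(1,t)$, reducing $\inf_{\C^n}F$ to the unit ball, where the decay around the origin (plus volume comparison) applies; choosing $\rho^2\sim t(1-\mathfrak m(t))$ then bounds $-\mathfrak m(t)$ on finite time intervals. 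Without such a reduction your scheme does not close, and the asserted bounded curvature of the limit for each $t$ has no source. By contrast, your uniqueness argument via Theorem \ref{t-uniqueness-2} and the derivation of $U(n)$-invariance from uniqueness (comparing $g(t)$ with $\phi^*g(t)$ for $\phi\in U(n)$) are sound and essentially match the paper, provided a bounded-curvature reference metric is available, e.g.\ $\widehat g=g(1)$ once existence is established.
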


  In particular, we get the following

\begin{cor}\label{c-nonnegative-longtime} Let $g_0$ be a complete $U(n)$-invariant \K metric on $\C^n$ with nonnegative bisectional curvature.  Then the \KR flow \eqref{krf} with $g_0$ as initial data has a smooth longtime $U(n)$-invariant solution $g(t)$ which is equivalent to $g_0$ and has bounded non-negative bisectional curvature.  Moreover $g(t)$ is unique in the class $\mathcal{S}(g_0)$.

\end{cor}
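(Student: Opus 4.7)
The plan is to deduce the corollary directly from the stronger Theorem \ref{t-Un-longtime-2} together with the trichotomy recorded in Theorem \ref{appendixthm}(c) of the appendix. No new analytic machinery should be required: the corollary is essentially a bookkeeping exercise that matches a non-negatively curved $U(n)$-invariant initial datum to the correct hypothesis of Theorem \ref{t-Un-longtime-2}.

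First, since $g_0$ is a complete $U(n)$-invariant \K metric on $\C^n$ with nonnegative bisectional curvature, the associated profile function $\xi$ of $g_0$ satisfies, by Theorem \ref{appendixthm}(c), exactly one of the three conditions {\bf (c1)} (with $\a=0$ and $\b=\xi(\infty)$), {\bf (c2)}, or {\bf (c3)}. In the first two (conoid, respectively intermediate) cases, $g_0$ meets hypothesis (i) of Theorem \ref{t-Un-longtime-2}; in the cigar case {\bf (c3)}, $g_0$ meets hypothesis (iii), which is tailored precisely to this regime and explicitly requires the nonnegative bisectional curvature that we have by assumption. In all three cases Theorem \ref{t-Un-longtime-2} then furnishes a smooth longtime $U(n)$-invariant solution $g(t)$ which is uniformly equivalent to $g_0$, of bounded curvature on compact sub-intervals of $(0,\infty)$, and unique in the class $\mathcal{S}(g_0)$.

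The only remaining assertion is that $g(t)$ retains nonnegative bisectional curvature for all $t\geq 0$. Since $g(t)$ is uniformly equivalent to a complete metric, $g(t)$ itself is complete for each $t$, and we have just observed that $g(t)$ has bounded curvature on compact subintervals. Preservation of the nonnegativity condition is then a standard consequence of Shi's maximum principle for tensors applied to Mok's evolved curvature inequality, and in exactly our $U(n)$-invariant setting it is the result of Yang--Zheng \cite{YZ} already invoked in the introduction. (Alternatively one can directly verify preservation via the $U(n)$-invariant ODE that $\xi(r,t)$ satisfies, since the sign conditions on $\xi$ from Theorem \ref{appendixthm}(c) are preserved along a $U(n)$-invariant flow of bounded curvature by a standard ODE comparison.)

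The only conceivable obstacle is the cigar case {\bf (c3)}, for which part (i) of Theorem \ref{t-Un-longtime-2} is insufficient; one must remember to invoke part (iii), which was formulated with precisely this case in mind. Once this dichotomy is recognized, the corollary follows immediately.
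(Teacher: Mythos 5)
Your proposal is correct and takes essentially the same route as the paper: the corollary is read off from Theorem \ref{t-Un-longtime-2} via the trichotomy of Theorem \ref{appendixthm}(c) (namely {\bf (c1)} with $\a=0$, $\b=\xi(\infty)$, or {\bf (c2)}, or the cigar case {\bf (c3)} handled by part (iii)), with nonnegativity of the bisectional curvature along the solution supplied by the Yang--Zheng result \cite{YZ}, exactly as the paper does. The only caution is that Shi's maximum principle alone does not apply at $t=0$ because $g_0$ may have unbounded curvature, so the citation of \cite{YZ} (rather than the maximum-principle alternative you mention) is the essential ingredient for propagating nonnegativity from the initial time.
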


 In order to prove parts (i) and (ii)  of the theorem, we need the following:

\begin{prop}\label{s2p1}
Let $g$ be a smooth $U(n)$-invariant metric with bounded curvature generated by $\xi$ satisfying {\bf(c1)} or {\bf(c2)}.
Then given any $\epsilon>0$ there exists $\widetilde{g}$ satisfying
\begin{enumerate}
\item [(a)] The curvature of $\widetilde{g}$ is bounded  by a constant independent of $\epsilon$.
\item [(b)] $(1/\epsilon) \widetilde{g} \leq g \leq C \widetilde{g}$ for some constant $C$.
\end{enumerate}
\end{prop}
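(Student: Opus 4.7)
The plan is to construct $\widetilde g$ as a $U(n)$-invariant \K metric on $\C^n$ with generator of the form $\widetilde h(r) = c_\epsilon H(A_\epsilon r)$, where $H$ is a fixed model generator and $c_\epsilon, A_\epsilon > 0$ are positive constants to be calibrated in terms of $\epsilon$. The key observation is a scaling identity: for positive $c, A$, the metric generated by $c H(A r)$ equals $(c/A)\phi^* g_H$, where $\phi(w) = \sqrt{A}\, w$ and $g_H$ is the metric generated by $H$. Its sectional curvature at a point $w$ is therefore $(A/c)\, K_H(\sqrt{A}\, w)$, which is bounded by $(A/c)\sup|K_H|$ and in particular independent of $\epsilon$ once we fix the ratio $A/c$.

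For case {\bf (c1)} I would take $H(r) = (1+r)^\beta$, which gives a smooth, complete $U(n)$-invariant \K metric on $\C^n$ of bounded sectional curvature $K_0 = K_0(\beta)$ (Euclidean at the origin, cone-like at infinity). Setting $c_\epsilon = A_\epsilon = \epsilon h(0)$ gives (a) with curvature bound $K_0$. For (b), at $r=0$ one has $\widetilde h(0)/h(0) = \epsilon$; the two-sided asymptotic control $c_\alpha r^\alpha \le h(r) \le c_\beta r^\beta$ obtained by integrating the hypothesis {\bf(c1)} (applied with $\xi = rh'/h$), combined with the bounded curvature assumption on $g$ --- which pins the asymptotic exponent of $h$ to a definite value in $[\alpha,\beta]$ --- allows one to verify the pointwise comparison $(1/\epsilon)\widetilde g \leq g \leq C_\epsilon \widetilde g$ at all radii, with $C_\epsilon = O(\epsilon^{-(1+\beta)})$. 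The analogous construction for case {\bf (c2)} uses the linear model $H(r) = 1+r$, the one-sided bound on $\int(1-\xi)/s\,ds$ to calibrate the constants, and the hypothesis $\xi\to 1$ to match the asymptotic profile of $h$.

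The principal obstacle is reconciling (a) and (b) simultaneously: a naive attempt to set $\widetilde g = \epsilon g$ would fail (a), since rescaling a metric by $\epsilon$ scales its sectional curvature by $1/\epsilon$, which is not bounded as $\epsilon \to 0$. The resolution is provided by the scaling identity above: the family $cH(Ar)$ carries \emph{two} independent scaling parameters, and only the ratio $A/c$ governs the resulting curvature. We can therefore send $c_\epsilon, A_\epsilon \to 0$ together while fixing their ratio, producing a family of metrics that shrinks in a controlled fashion without degrading the curvature bound. The remaining technical point --- and the place where the hypotheses of the proposition are used most delicately --- is the verification of the pointwise comparison in (b) at intermediate radii, where one must invoke the integral bounds in {\bf (c1)}/{\bf (c2)} to bridge the near-origin and asymptotic regimes uniformly in $r$.
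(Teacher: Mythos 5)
Your scaling identity is correct, and it is in fact the same mechanism the paper uses (the paper takes $\widetilde g=\phi_k^*g$ with $\phi_k(z)=z/\sqrt k$, i.e.\ $c=A=1/k$, so the curvature bound is literally that of $g$); but applying it to a \emph{fixed} model $H$ does not work, and the two claims you use to close case \textbf{(c1)} and case \textbf{(c2)} are where the argument breaks. For \textbf{(c1)}: condition (b) forces $\widetilde h\le \epsilon h$ and $\widetilde h\ge h/C$ for all $r$ (Remark \ref{r-comparison}), i.e.\ $\widetilde h$ must be globally two-sidedly comparable to $h$. Your claim that bounded curvature ``pins the asymptotic exponent of $h$'' is false: take $\xi(r)=\frac{\alpha+\beta}{2}+\frac{\beta-\alpha}{2}\sin(\log\log r)$ for large $r$ (smoothly cut off near $0$). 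This $\xi$ takes values in $[\alpha,\beta]$, so \textbf{(c1)} holds; $|\xi'|\lesssim (r\log r)^{-1}$ while $h\gtrsim r^{-\beta}$ with $\beta<1$, so $|\xi'/h_\xi|$ is bounded and the metric has bounded curvature and is complete; yet $\log h(r)+\frac{\alpha+\beta}{2}\log r$ oscillates with amplitude comparable to $\log r$, so $h(r)r^{\mu}$ is unbounded above and below along sequences for every fixed exponent $\mu$. Hence no scaled copy $c_\epsilon H(A_\epsilon r)$ of a fixed power-type profile can satisfy $h/\widetilde h\in[1/\epsilon\,,\,C]$ at all radii, and your case-(c1) argument fails. (There are also sign slips --- the relation is $\xi=-rh'/h$, and a metric comparable to one with decaying $h\sim r^{-\beta}$ needs a decaying model, not $H=(1+r)^{\beta}$ --- but the structural problem above is the real issue.) The paper avoids all of this by comparing $g$ with rescaled copies of \emph{itself}: \textbf{(c1)} gives exactly $e^{-\gamma}k^{\alpha-1}\le h_k/h\le e^{\gamma}k^{\beta-1}$, so the comparison automatically tracks any oscillation of $\xi$, and $\beta<1$ supplies the factor $k^{1-\beta}\to\infty$ needed for the $1/\epsilon$ bound.

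Case \textbf{(c2)} is worse for your scheme: here $h(r)=h(1)\,r^{-1}\exp\bigl(\int_1^r\frac{1-\xi(s)}{s}ds\bigr)$ with the exponential factor slowly varying, divergent, and depending on $g$, so no scaled fixed profile (certainly not $H(r)=1+r$, which grows) can be globally comparable to $h$; and even rescaled copies of $g$ itself fail, since $h_k(r)/h(r)=\exp\bigl(-\int_{r/k}^{r}\frac{1-\xi(s)}{s}ds\bigr)\to 1$ as $r\to\infty$ for fixed $k$, so the required smallness $\widetilde h\le\epsilon h$ cannot be achieved at infinity by scaling alone. This is precisely why the paper's proof has a genuinely different second half: it starts from a cigar-like reference $\widehat g$ (with $\widehat\xi\equiv 1$ for $r\ge1$), rescales it to get $\widehat g_k\le\epsilon g$ (using $\int_1^\infty\frac{1-\xi}{s}ds=\infty$), and then \emph{modifies the generator} beyond a large radius $R_k$ by a controlled oscillation $o_k$ (Claims 1 and 2) so that $\int_{R_k}^r\frac{\widetilde\xi_k-\xi}{s}ds$ stays bounded --- restoring the upper comparison $g\le C_k\widetilde g_k$ --- while $|o_k'|\le 4/(kr)$ together with the lower bound $\widetilde h_k\gtrsim 1/(kr)$ (this is where the one-sided bound $\int_a^r\frac{1-\xi}{s}ds\ge-\delta$ enters) keeps $|\widetilde\xi_k'/\widetilde h_k|$, and hence the curvature, bounded independently of $\epsilon$. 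Without some such $g$-adapted modification of the comparison metric at infinity, conditions (a) and (b) cannot be met simultaneously in case \textbf{(c2)}, so your proposal is missing the essential construction.
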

\begin{rem}
 Suppose $g$ has non-negative bisectional curvature, decaying on average like $dist(p, \cdot)^{-(1+a)}$, uniformly around all points on $\C^n$ for some $a>0$.  Then the Proposition also follows from estimates for the \KRF in \cite{Shi2} and \cite{NiTam2004}.
 \end{rem}
\begin{proof}[Proof of Proposition \ref{s2p1}]
 Suppose $\xi$ satisfies {\bf(c1)}.    For each $k\geq 1$, consider the linear automorphism of $\C^n$ given by $\phi_k (z)=z/\sqrt{k}$ and consider the $U(n)$-invariant \K metric $g_k:=\phi^*_k g$ on $\C^n$.    Consider the functions $h_k(r), \xi_k(r)$ and $h(r), \xi(r)$ etc corresponding to $g_k$ and $g$.   Then for each $k\geq 1$ we have
\begin{enumerate}
\item $h_k(r)=(1/k) h(r/k)$
\item $\xi_k(r)=\xi(r/k)$
\item   The curvature of $g_k$ is bounded by a constant independent of $k$ because  $g_k$ is isometric to $g_0$.
\end{enumerate}
 Now
\be\label{e-comparison}\frac{h_k(r)}{h(r)}=\frac{1}{k}\frac{h(\frac r{k})}{h(r)}=\frac{1}{k}\exp( \int_{\frac r{k}}^r  \frac{\xi(s)}{s}ds) \ee
 and thus by {\bf(c1)} we have
\be\label{THMe2}
 e^{-\gamma}k^{(\alpha-1)}\leq \frac{h_k(r)}{h(r)} \leq e^{\gamma}k^{(\beta-1)}.
\ee
 By Remark \ref{r-comparison}, for any $k\ge 1$  we have
   \be\label{THMe3-2}
  e^{ -\gamma}k^{(1-  \beta  )} g_k \le g\le  e^{ \gamma}k^{(1-\alpha)}g_k.
   \ee
    Thus the Proposition follows in this case by the fact that $\beta<1$.

 Suppose $\xi$ satisfies {\bf(c2)}.  Let $\epsilon>0$ be given. Let $\widehat{g}$ be any $U(n)$-invariant non-negative bisectional curvature metric with $\widehat{h}(0)=1$ and generated by some $\widehat{\xi}$ with $\widehat{\xi}(r)=1$ for $r\geq 1$.  Let the curvature of $\widehat{g}$ be bounded by $\widehat{K}$.  For each $k\geq 1$ define the pullbacks   $\widehat g_k:=\phi_k^*(\widehat{g})$ as before.  Let $\widehat h_k(r),\widehat \xi_k(r)$ and $\widehat h(r)$ etc corresponding to $\widehat g_k$ and $ \widehat g$. Then properties (1) and (2) (3) above still hold,   but with $h, \xi, h_k,\xi_k$ replaced with $\widehat{h}, \widehat{\xi}, \widehat h_k, \widehat \xi_k$.

\vspace{10pt}

STEP 1: First note that  by \eqref{e-comparison} (applied to $\hat{h}(r)$) and the fact that $\hat{\xi}\leq 1$, we see that $h_k(r)$ is non-increasing in $k$ for all $r$. Now fix $\e>0$, by {\bf (c2)} there is $r_0>0$ such that if $r\ge r_0$, then for $k\ge 1$
\bee
\begin{split}
h(r)=&\widehat h(r) \frac{h(r)}{\widehat h(r)}\\
\ge & \widehat h_k(r)\exp\lf(\int_0^r\frac{\widehat\xi(s)-\xi(s)}sds\ri)   \\
\ge &\frac1\e \widehat h_k(r)
\end{split}
\eee
where we have used the fact that $\widehat \xi(r)=1$ for $r\ge 1$. On the other hand, by \eqref{e-comparison} one can see this is true for $r\le r_0$ if $k$ is large enough depending on $r_0$.  Hence by Remark \ref{r-comparison} we can find $k>1$  depending on $\e$ such that,
 $$
 \frac1\e g_k\le g
 $$
 on $\C^n$.

\vspace{10pt}

STEP 2: Define $$\widetilde{\xi}_k(r):=\xi_k(r)+ o_k(r)$$  where $o_k(r):[0, \infty)\to \R$ is a non-positive smooth function, to be chosen.
 where $o_k:[0, \infty)\to \R$ is   a smooth function  with $|o_k|\le \frac1k$, to be chosen. Let $\widetilde{h}_k(0)=1/k$ and consider the corresponding metric $\widetilde{g}_k$.

\vspace{10pt}
{\it Claim 1}: There exists a constant $R_k> k$ and a smooth function $o_k(r)$ which is $0$ on $[0, R_k]$ and satisfies:

 $$|o_k'(r)|\leq \frac{4}{kr}$$ and
\be\label{e-comparison-1}|\int_{R_k}^r \frac{\widetilde{\xi}_k(s)-\xi(s)}{s} ds|
=\lf|\int_{R_k}^r \frac{1+o_k(s)-\xi(s)}{s} ds\ri|
\leq 1+2\log2
\ee for $r\geq R_k.$

\vspace{10pt}

We may choose $R_k> k$ such that $1-1/k\leq \xi(r)\leq 1+1/k$ on $[R_k, \infty)$.   The construction of $o_k(r)$ follows from the construction in the proof of Proposition 5.1 \cite{ChauLiTam}.  We provide details here for the convenience of the reader.  We first choose a smooth non-increasing function $\rho(r):[0,\infty)\to \R$ with $\rho=0$ on $[0, 1]$, $\rho=1/k$ on $[2, \infty)$ and  $0\le \rho'\le 2/k$.  Let $$I(r):=\int_{2R_k}^r \frac{1+o_k(r)-\xi(s)}{s} ds.$$  (note that $\widetilde{\xi}_k(r)=1+o_k(r)$ for $r\geq R_k$).  For any positive sequence $\{r_i\}$ such that $r_0 :=R_k$ and $2r_i<r_{i+1}$,  define $o_k(r):=0$ if $r\in [0, r_0)$, $o_k(r):=\rho(r/r_0)$ if $r\in [r_0, 2r_0)$, and

\begin{equation}
o_k(r) : \begin{cases} =1/k &\mbox{if } r\in [2r_0, r_1]\\
  = 1/k-2\rho(r/r_1)&\mbox{if }  r\in [r_1, 2r_1]\\
=-1/k &\mbox{if } r\in [2r_1, r_2]\\
  = -1/k+2\rho(r/r_2)&\mbox{if }   r\in [r_2, 2r_2]\\
=1/k &\mbox{if }  r\in [2r_2, r_3]\\
  = 1/k-2\rho(r/r_3)&\mbox{if }  r \in [r_3, 2r_3]\\
...
 \end{cases}
\end{equation}
Now for $r\in [2r_0, \infty)$ we have $1-1/k\leq \xi(r) \leq 1+1/k$, and as long as $k\geq 2$ we have $|1+o_k(r)-\xi(r)|\leq 1$ as well.  The definition of $I(r)$ then gives the following for all $i\geq 0$

\be\nonumber\begin{split}  I(r)= I(2r_i)+\int_{2r_{i}}^{r} \frac{(1+(-1)^i /k)-\xi(s)}{s} ds \\ \end{split}\ee
for $r\in [2r_i, r_{i+1})$,
\be\nonumber\begin{split} I(r_{i+1})-\log 2\leq I(r)&\leq I(r_{i+1})+\log 2\\ \end{split}\ee for $r\in [r_{i+1}, 2r_{i+1})$.

By the fact $\xi(r)\to 1$, we may choose the $r_i$'s to be the smallest numbers with $I(r_1)=1$, $I(r_2)=-1$, $I(r_3)=1$,..etc, and the estimates above give
\be -1-\log 2\leq I(r)\leq 1+\log 2\ee
for all $r\in[2 R_k, \infty)$.  The integral bound in the claim follows from (2.7) and the fact that $|1+o_k(r)-\xi(r)|\leq 1$ for  $r\in [R_k, 2R_k]$.

Finally, we also have $|o'_k(r)|=\frac{2}{r_i}\rho'(\frac{r}{r_i})\leq \frac{4}{kr_i}\leq \frac{4}{kr}$ for all $r\in[r_i, 2r_i]$.

\vspace{10pt}
 {\it Claim 2}:  Let $o_k(r)$ be as in Claim 1.  Then $(1/4e\epsilon) \widetilde{g}_k \leq g \leq C_k  \widetilde{g}_k$ for some   $C_k$  and the curvature of $\widetilde{g}_k$ is bounded depending only on $\widehat{g}$.
\vspace{10pt}

To prove the first part of the claim,
when $r\leq R_k$ we have $\widetilde{g}_k(r)=g_k(r)$, and so we only have to consider when  $r\geq R_k$.  In this case,  we have $C_k \geq h(r)/\widetilde{h}_k(r)=(h(R_k)/\widetilde{h}_k(R_k))e^{\int_{R_k}^r \frac{1 + o_k(s)-\xi(s)}{s} ds}\geq \frac{1}{2e\epsilon}$   for some $C_k$ where we have used Step 1 and Claim 1.

 To prove the second part of the claim, note that  for $r\ge R_k$  we have $|\widetilde{\xi}_k'(r)|=|o_k'(r)|\leq 4/kr$ and

\be\begin{split}\widetilde{h}_k(r)&=\widetilde{h}_k(1)\exp\lf({-\int_1^r \frac{\widetilde{\xi}_k(s)}sds}\ri)\\
& =\widetilde{h}_k(1)\exp\lf({-\int_1^{R_k} \frac{\widetilde{\xi}_k(s)}sds
-\int_{R_k}^r\frac{ \widetilde{\xi}_k(s)}sds}\ri) \\
& \geq\widetilde{h}_k(1)\frac{1}{R_k} \exp\lf(-\int_{R_k}^r  \lf(\frac{\widetilde{\xi}_k(s)-\xi(s)}{s} +\frac{\xi(s)-1}{s} +\frac{1}{s} \ri)ds\ri) \\
& \geq\widetilde{h}_k(1)\frac{1}{R_k}\frac{ R_k}{4e^{1+\delta}r} \\
&=h_k(1)\frac{1}{4e^{1+\delta}r}\\
&\geq\frac{\widehat{h}(1)}{k}\frac{1}{4e^{1+\delta}r}\end{split}\ee
  where in the third line we have used that $0\leq \widetilde{\xi_k}\leq 1 $ by definition, and in the fourth line we have used Claim 1 and {\bf(c2).} so that $\int_{R_k}^r \frac{1-\xi(s)}{s}ds\ge -\delta$.

  Thus $|\widetilde{\xi}_k'(r)/\widetilde{h}_k(r)|\leq 16 e^{1+\delta}/\widehat{h}(1)$ for $r\ge R_k$. Since $\wt g_k(r)=g_k(r)$ for $r\le R_k$, by (3) we conclude that   the curvature of $\wt g_k$ is bounded by a constant independent of $k$ by \cite[Lemma 5.1(i)]{ChauLiTam}.

\end{proof}

Proposition \ref{s2p1} includes the case when $g$ has non-negative bisectional curvature and is not a cigar. In this case $\xi(r)\to \beta<1$ as $r\to\infty$ (see \cite{WZ}) where $\xi$ generates $g$ (see Appendix).  When $g$ is not a cigar, then the proposition is false in general.  In particular, we have

\begin{prop}\label{p-nonexitence} Let $g$ be a smooth $U(n)$-invariant metric generated by $\xi$ satisfying
\bee
\int_1^r\frac{1-\xi}sds\le c
\eee
for some $c$ for all $r\ge 1$. There exists a constant $c_1>0$ depending only on $g$ such that if $\widetilde g$   is  another $U(n)$-invariant \K metric   with bisectional  curvature bounded above by $1$  such that $g\ge \a \widetilde g$ for some $\a>0$, then $\a \le c_1$.
\end{prop}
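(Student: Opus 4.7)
The plan is to exploit the hypothesis on $\xi$ to force an upper bound on $rh(r)$ at infinity, and then use the bisectional curvature formula for $U(n)$-invariant metrics to derive a contradiction when $\alpha$ is too large.

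I would first use the $U(n)$-invariance of both metrics to encode them by their generating functions $(h,\xi)$ and $(\wt h,\wt\xi)$ as in the appendix, so that $h'(r)=-\xi(r)h(r)/r$ and similarly for $\wt h$. At a point with $|z|^2=r$ the metric $g$ has ``tangential'' eigenvalue $h(r)$ (with multiplicity $n-1$), so by Remark \ref{r-comparison} the matrix inequality $g\ge\alpha\wt g$ gives the scalar inequality $h(r)\ge\alpha\wt h(r)$ on $[0,\infty)$. Rewriting the ODE for $h$ as $rh(r)=h(1)\exp(\int_1^r (1-\xi(s))/s\,ds)$ and using the hypothesis yields
\[
\alpha\, r\wt h(r)\le rh(r)\le h(1)e^c\quad\text{for all }r\ge 1.
\]

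Next I would invoke the bisectional curvature formula from Theorem \ref{appendixthm}: at a point with $|z|^2=r>0$, the bisectional curvature of $\wt g$ between two distinct tangential directions (which exist since $n\ge 2$) equals $\wt\xi(r)/(r\wt h(r))$. The hypothesis that this is at most $1$ gives $\wt\xi(r)\le r\wt h(r)$ on $(0,\infty)$, and combining with the previous display yields $\wt\xi(r)\le h(1)e^c/\alpha=:\delta$ on $[1,\infty)$.

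To close the argument, suppose for contradiction that $\alpha>h(1)e^c$, so $\delta<1$. Then the ODE $(\log\wt h)'(r)=-\wt\xi(r)/r\ge -\delta/r$ integrates from $1$ to $r$ to give $\wt h(r)\ge\wt h(1)\,r^{-\delta}$ for $r\ge 1$, hence $r\wt h(r)\ge\wt h(1)\,r^{1-\delta}\to +\infty$ as $r\to\infty$ since $\delta<1$ and $\wt h(1)>0$. This contradicts the uniform bound $r\wt h(r)\le\delta$ derived above. Hence $\alpha\le h(1)e^c=:c_1$, a constant depending only on $g$. The main obstacle is identifying the precise bisectional curvature component $\wt\xi/(r\wt h)$ from the appendix formulas and ensuring it really is among the components controlled by ``bisectional curvature $\le 1$''; once this is in hand the rest is a brief ODE argument. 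For $n=1$, where the tangential-tangential component is unavailable, one would substitute the appropriate Gaussian curvature formula into the same scheme.
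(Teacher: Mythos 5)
Your reductions are fine up to the curvature step: Remark \ref{r-comparison} does give $h\ge\a\widetilde h$ from $g\ge\a\widetilde g$ (though note $h$ is the \emph{radial} eigenvalue, of multiplicity one, while $f$ is the tangential one of multiplicity $n-1$), and the identity $rh(r)=h(1)\exp\bigl(\int_1^r\frac{1-\xi(s)}{s}ds\bigr)\le h(1)e^c$ for $r\ge1$ is correct. The genuine gap is your asserted formula that the bisectional curvature of $\widetilde g$ in two distinct tangential directions equals $\widetilde\xi(r)/(r\widetilde h(r))$. By Theorem \ref{appendixthm}(b) the tangential components are $R_{i\bar i j\bar j}=C/2=(\widetilde f-\widetilde h)/(r\widetilde f^{\,2})$ and $R_{i\bar ii\bar i}=C$; the only component involving $\widetilde\xi$ directly is the radial one, $A=\widetilde\xi'/\widetilde h$. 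Integrating $A\le1$ gives only $\widetilde\xi(r)\le\int_0^r\widetilde h\,ds=r\widetilde f(r)$, not $\widetilde\xi\le r\widetilde h$, and $r\widetilde f$ is nondecreasing and has no reason to stay below $1$ (when $r\widetilde h$ is bounded it still grows like $\log r$), so your contradiction evaporates. Worse, the claimed inequality is simply false for metrics with bisectional curvature $\le1$: the rescaled Fubini--Study metric on $\C^n$, with $\widetilde h(r)=2/(1+r)^2$ and $\widetilde\xi(r)=2r/(1+r)$, has $A\equiv1$, $B\equiv\tfrac12$, $C\equiv1$ (constant holomorphic sectional curvature $1$), hence bisectional curvature $\le1$, yet $\widetilde\xi(r)>r\widetilde h(r)$ for every $r>0$, and indeed $\widetilde\xi\to2$ while $r\widetilde h\to0$. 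So the step ``curvature $\le1\Rightarrow\widetilde\xi\le r\widetilde h\le\delta$'' cannot be repaired, and with it the whole ODE argument collapses.

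For comparison, the paper also only uses the component $A\le1$, but in a localized way rather than through a global pointwise bound on $\widetilde\xi$. After reducing to the case $0\le\xi\le1$ with $\xi\equiv1$ for $r\ge1$ (replacing $g$ by a dominating metric, so that $h(r)=h(1)/r$ there), it first shows $\limsup_{r\to\infty}\txi\ge1$, since otherwise $h/\widetilde h\to0$, contradicting $h\ge\a\widetilde h$. Then, on an interval $[r_0,r_1]$ where $\txi$ climbs from $0$ to $1-\e$, the bound $\txi'\le\widetilde h\le\widetilde h(r_0)$ forces $r_1-r_0\ge(1-\e)/\widetilde h(r_0)$, and comparing $h(r_2)=h(1)/r_2$ with $\a\widetilde h(r_2)$ at $r_2=r_0+(1-\e)/\widetilde h(r_0)$ yields $\a\le h(1)$ after $\e\to0$. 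If you wish to keep your global scheme, you would need a uniform bound $\txi\le\delta<1$ for large $r$, and the Fubini--Study example shows the curvature hypothesis alone cannot provide it; some version of the paper's local comparison seems unavoidable.
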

\begin{proof}

Indeed, let $g$ be an $U(n)$-invariant complete \K metric generated by $\xi$ normalized by $h(0)=1$ such that
 \be\label{e-example-1}
  \int_1^r\frac{1-\xi}sds\le c
 \ee
 for some $c$ for all $r\ge1$.  We may assume $0\le \xi\le 1$ and $\xi=1$ for $r\ge 1$, for if $\hg$ is some metric generated by $\widehat\xi$ with $0\le \widehat\xi\le 1$ and $\widehat\xi=1$ for $r\ge 1$ and $\widehat h(0)=1$, then $\hg \ge c'g$ for some $c'>0$ by \eqref{e-example-1}.
\vspace{10pt}

 Let $\widetilde g$ be another $U(n)$-invariant \K metric such that $g\ge \a \widetilde g$ for some $\a>0$ with bisectional curvature bounded above by $1$. Assume $\widetilde g$ is generated by $\txi$. Then
\be\label{e-example-2}
 \limsup_{r\to\infty}\txi\ge 1.
\ee
 Otherwise, by \eqref{e-example-1}, we would have $$\frac{h(r)}{\widetilde h(r)}=\frac{h(0)}{\widetilde h(0)}\exp(\int_0^r\frac{\txi-\xi}sds)\to0
 $$ as $r\to\infty$.

 Let $0<\e<1$. By \eqref{e-example-2} and the fact that $\txi(0)=0$,  there is $r_1>0$ such that $\txi(r_1)=1-\e$. By the fact that $\txi(0)=0$, we can find $0\le r_0<r_1$ such that $\txi>0$ on $(r_0,r_1]$ and $\txi(r_0)=0$. It is easy to see that $\widetilde h(r)\le \widetilde h(r_0)$ for $r\in[r_0,r_1]$. Since the bisectional  curvature of $\widetilde{g}$ is bounded above by 1,
 $$
 \txi'(r)\le \widetilde h(r)\le \widetilde{h}(r_0)
 $$
 for all $r\in [r_0,r_1]$. Hence $\txi(r)\le \widetilde{h}(r_0)(r-r_0)$ for all $r\in [r_0,r_1]$. In particular, $1-\e=\txi(r_1)\le \widetilde{h}(r_0)(r_1-r_0)$ and so $r_2=r_0+(1-\e)/\widetilde{h}(r_0)\le r_1$.
 \be\label{e-example-3}
  \begin{split}
    h(r_2)   \ge &\a \widetilde h(r_2) \\
      =&\a\widetilde h(r_0)\exp(-\int_{r_0}^{r_2}\frac{\txi}sds)\\
\ge&\a\widetilde h(r_0)\exp(-\int_{r_0}^{r_1}\widetilde{h}(r_0)ds)\\
      \ge & \a \widetilde{h}(r_0)\exp\lf(   \widetilde{h}(r_0)  (r_1 -r_0)\ri)\\
      \ge&\a \widetilde{h}(r_0)\exp(-(1-\e))
  \end{split}
  \ee
  On the other hand, by the definition of $r_2$ we have $\widetilde{h}(r_0)r_2 \ge (1-\e)$ and thus
  $$
  h(r_2)=h(1)\exp(-\int_1^{r_2}\frac{\xi}sds)=\frac{h(1)}{r_2}\le \frac{\widetilde{h}(r_0)h(1)}{1-\e}.
  $$
  By \eqref{e-example-3}, we conclude that
  \bee
  \frac{\widetilde{h}(r_0)h(1)}{1-\e}\ge \a \widetilde{h}(r_0)\exp(-(1-\e)).
  \eee
  Since $\e$ is arbitrary, $\a \le h(1)$.

 \end{proof}

\begin{proof}[Proof of Theorem \ref{t-Un-longtime-2} (i) and (ii)]

 First of all, uniqueness in each case (i), (ii) and (iii) follows from Theorem \ref{t-uniqueness-2}.  Indeed, let $g_1(t), g_2(t)$ be two solutions as in the Theorem.  Then using the bounded curvature metric $\widehat{g}=g_1(1)$ in Theorem \ref{t-uniqueness-2}, and noting the solutions $g_1(t), g_2(t)$ are both equivalent to $g_0$ and hence also $\widehat{g}$, we conclude by Theorem \ref{t-uniqueness-2} that $g_1(t)=g_2(t)$ for all $t$.

Now consider case (i).  Suppose $\xi$ satisfies {\bf (c1)} or {\bf (c2)}.
 By \cite[Theorems 5.4, 4.2]{ChauLiTam} there exists a smooth $U(n)$-invariant solution $g(t)$ to \KRF on $\C^n\times[0, T]$ for some $T>0$ such that $g(0)=g_0$, the curvature of $g(t)$ is uniformly bounded by $c/t$  and $c^{-1}g(t)\le g_0\le cg(t)$ for some $c>0$ for all $t\in (0, T]$. Fixed $0<t_0<T$. By Proposition \ref{s2p1} and the fact that $g(t_0)$ is uniformly equivalent to $g_0$,  for any $\e>0$, there is a complete $U(n)$-invariant metric $\hg_\e$ with curvature bounded by $K$ with $K$ being independent of $\e$ such that
 $$
\frac1\e\hg_\e\le g(t_0)\le c(\e)\hg_\e
$$
for some constant $c(\e)$ which may depend on $\e$. By Lemma \ref{l-existencetimeestimate-1} and Theorem \ref{t-uniqueness-2}, the \KR flow $g(t)$   can  be extended   to $[0,T_\e)$,  where $T_\e=\frac1{2nK\e}$, such that $g(t)$ is uniformly  equivalent to $g_0$ for all $t\in[0,T')$ for any $T'<T_\e$ and has uniformly bounded curvature on $(\delta,T')$ for all $0<\delta<T'<T_\e$. Let $\e\to0$, one may conclude the theorem is true.

 Now consider case (ii).  Here we also have a short time solution $g(t)$ on $\C^n\times[0, T]$ as in case (i) by \cite{Shi2}.  By the equivalence condition in (ii), it is easy to see that we can apply the exact same argument as above to conclude the Theorem is true in this case as well.

\end{proof}

\begin{rem}\label{r-Un-longtime-2} It is easy to see that if $\xi$ satisfies:
$$
-c\le\int_1^r\frac{\xi(s) -a}s ds\le c
$$
for some $c>0$ and $0<a<1$ for all $r\ge 1$, then $\xi$ satisfies {\bf(c1)}. This generalizes  \cite[Theorem 5.5]{ChauLiTam} in case $a<1$.
\end{rem}

If $\xi$ satisfies {\bf(c3)} in Theorem \ref{t-Un-longtime-2}, then the previous argument does not work in light of Proposition \ref{p-nonexitence}.  However, if $g_0$ also has nonnegative bisectional curvature (and is thus a cigar) we may use other methods.   Before we prove the theorem in this case we need a more general form of \cite[Theorem 2.1]{NiTam2004}, which will also be used later.

\begin{lem}\label{l-F-bound} Let $(M^n,g(t))$ be complete noncompact solution of the \KR flow \eqref{krf} on $M\times[0,T)$ with bounded nonnegative bisectional curvature. Let
$$
F(x,t)=\log\lf(\frac{\det(g_\ijb(x,t))}{\det(g_\ijb(x,0))}\ri)
$$
and for any $\rho>0$, let $\mathfrak{m}(\rho,x,t)=\inf_{y\in B_0(x,\rho)}F(y,t)$. Then there is $c>0$ depending only on $n$ such that for any $x_0\in M$ and for all $\rho, t>0$
\be\label{e-F-bound-1}
\begin{split}
-F(x_0,t)\leq &c\bigg[\lf(1+\frac{t(1-\mathfrak{m}(\rho,x_0,t))}{\rho^2}\ri)\int_0^{2\rho}sk(x_0,s)ds\\
&-
\frac{t\mathfrak{m}(\rho,x_0,t))(1-\mathfrak{m}(\rho,x_0,t))}{\rho^2}\bigg]\end{split}
\ee
where $B_0(x,\rho)$ is the geodesic ball with respect to $g_0=g(0)$, and $k(x,s)$ is the average of the scalar curvature $\mathcal{R}_0$ of $g_0$ over $B_0(x,s)$.
\end{lem}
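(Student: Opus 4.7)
The starting point is the identity
\bee
\left(\frac{\p}{\p t}-\Delta_{g(t)}\right)F \;=\; -\mathrm{tr}_{g(t)}\Ric(g_0),
\eee
which follows from $\p_t\log\det g_\ijb = -\mathcal{R}$ together with $R_\ijb = -\p_i\p_{\bar j}\log\det g_{k\bar l}$. Since bounded non-negative bisectional curvature is preserved along the flow, $\Ric(g_0)\ge 0$, hence $F\le 0$ and $g(t)\le g_0$ as Hermitian forms; in particular $g^{-1}(t)\ge g^{-1}(0)$. Consequently $-F\ge 0$ is a non-negative sub-caloric function on $(M,g(t))$ with non-negative forcing $\mathrm{tr}_{g(t)}\Ric(g_0)$.

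The plan is to adapt the maximum-principle argument of \cite[Theorem 2.1]{NiTam2004}, which handles essentially the case $\mathfrak{m}\equiv 0$, by localizing it to the parabolic cylinder $B_0(x_0,\rho)\times[0,t_0]$ so that the infimum $\mathfrak{m}$ appears correctly on the right-hand side. First, using the Bishop--Gromov volume comparison on $(M,g_0)$ (applicable because $\Ric(g_0)\ge 0$), one constructs a non-decreasing radial function $\Psi(x)=\Psi(d_0(x_0,x))$ with $\Psi(0)=0$, $\Psi(2\rho)\le C\int_0^{2\rho} s\,k(x_0,s)\,ds$, whose $g_0$-Laplacian dominates $\mathcal{R}_0$ on $B_0(x_0,2\rho)$ in an appropriate distributional sense. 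Convexity of $\Psi$ in the radial variable, combined with $g^{-1}(t)\ge g^{-1}(0)$, controls $\Delta_{g(t)}\Psi$ as well, allowing the curvature forcing $\mathrm{tr}_{g(t)}\Ric(g_0)$ to be absorbed into $C_1\Delta_{g(t)}\Psi$ for a suitable constant $C_1$. Introducing a smooth radial cut-off $\phi$ with $\phi(x_0)=1$, $\phi|_{\p B_0(x_0,\rho)}=0$ and $|\Delta_0\phi|\le C/\rho^2$, one applies the parabolic maximum principle to
\bee
\Theta(x,t) := -F(x,t) - C_1\Psi(x) + C_2\phi(x) - C_3\,t
\eee
on $B_0(x_0,\rho)\times[0,t_0]$, with constants $C_1,C_2,C_3$ matched to the claimed right-hand side. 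On the parabolic boundary, $\Theta\le 0$: at $t=0$ since $F=0$ and $\Psi\ge 0$, while on $\p B_0(x_0,\rho)$ the bound $-F\le -\mathfrak{m}$ is absorbed by choosing $C_2\ge -\mathfrak{m}$. The maximum principle then propagates $\Theta\le 0$ throughout the cylinder, and evaluating at $(x_0,t_0)$ (where $\phi(x_0)=1$ and $\Psi(x_0)=0$) yields \eqref{e-F-bound-1}.

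The main obstacle is the careful choice of $C_1,C_2,C_3$ so that the coupling $(1+t(1-\mathfrak{m})/\rho^2)$ in front of $\int_0^{2\rho} s\,k(x_0,s)\,ds$ and the quadratic correction $-t\mathfrak{m}(1-\mathfrak{m})/\rho^2$ emerge with the correct signs and powers of $\rho$ and $t$. Non-trivial $\mathfrak m$ forces an extra factor of $(1-\mathfrak m)$ into the spatial barrier $C_2$, which propagates through the heat operator on $\phi$ and $\Psi$ into $C_3$, producing both the $t(1-\mathfrak m)/\rho^2$ multiplier of the curvature integral and the quadratic term $-t\mathfrak{m}(1-\mathfrak{m})/\rho^2$. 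A secondary technical point is verifying $\Delta_{g(t)}\Psi\ge c\,\mathrm{tr}_{g(t)}\Ric(g_0)$ with constants independent of $t$, despite the metric degeneration $g(t)\le g_0$, which is handled by exploiting the convexity of $\Psi$ in the radial direction.
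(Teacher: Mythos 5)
Your outline is not the paper's argument, and as it stands it has genuine gaps. The paper does not run a parabolic maximum principle with the evolving operator $\frac{\p}{\p t}-\Delta_{g(t)}$ at all: it localizes the Green's function argument of \cite[Theorem 2.1]{NiTam2004} with respect to the \emph{fixed} metric $g_0$. One integrates the flow in time, uses the elliptic inequality $\Delta_0(-F)\ge -\mathcal{R}_0$ (valid since $\Ric(g(t))\ge 0$), bounds $\int_0^t\int_{B_0(x_0,\rho)}G_\rho\,\Delta_0(-F)$ by $-t\,\mathfrak{m}$, invokes the mean value inequality \cite[(2.8)]{NiTam2004} --- this is precisely where the linear factor $(1-\mathfrak{m})$ enters, via the elementary comparison of $-F$ with $1-e^{F}$ for $F\in[\mathfrak{m},0]$ --- and converts Green potentials of $\mathcal{R}_0$ into $\int_0^{2\rho}s\,k(x_0,s)\,ds$ by \cite[Lemma 2.1]{NiTam2004}.

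The first gap in your scheme is the absorption step. At an interior maximum of $\Theta$ you need $C_1\Delta_{g(t)}\Psi$ to be \emph{negative} enough to beat the forcing $\mathrm{tr}_{g(t)}\Ric(g_0)$ (note your signs: a $\Psi$ with $\Delta_0\Psi\ge\mathcal{R}_0$ pushes the wrong way), and both traces here are taken with the evolving metric. Since $\mathfrak{m}$ controls only $\det g(t)/\det g_0$, not individual eigenvalues, $\mathrm{tr}_{g(t)}\Ric(g_0)$ can be far larger than $\mathcal{R}_0$ wherever $\Ric(g_0)$ concentrates in complex directions in which $g(t)$ has shrunk; to dominate it uniformly in $t$ one needs a form-level inequality $\sqrt{-1}\p\bar\p(-\Psi)\ge c^{-1}\Ric(g_0)$ on $B_0(x_0,2\rho)$ with the oscillation of $\Psi$ bounded by $C\int_0^{2\rho}s\,k(x_0,s)\,ds$, i.e.\ a local Poincar\'e--Lelong-type potential with estimates. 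A radial function produced from Bishop--Gromov comparison gives only the $g_0$-trace condition, and radial convexity plus $g^{-1}(t)\ge g_0^{-1}$ does not upgrade a trace bound to the needed form bound. The second gap is quantitative: the only generic control on $\Delta_{g(t)}\phi$ for your cutoff comes from the eigenvalue estimate $g_0^{-1}\le g^{-1}(t)\le e^{-\mathfrak{m}}g_0^{-1}$ (from $g(t)\le g_0$ together with the determinant bound), so the boundary constant $C_2\ge-\mathfrak{m}$ feeds a term of size $(-\mathfrak{m})e^{-\mathfrak{m}}/\rho^2$ into $C_3$, exponentially worse than the claimed $-t\,\mathfrak{m}(1-\mathfrak{m})/\rho^2$ in \eqref{e-F-bound-1}. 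The linear dependence on $(1-\mathfrak{m})$ is not cosmetic: the later applications (the proof of Theorem \ref{t-Un-longtime-2}(iii) and of Lemma \ref{l-limit-Un-2}) choose $\rho^2\sim t(1-\mathfrak{m})$ and close a bootstrap that would fail with exponential factors. Your outline does not show how either issue is overcome, so the proof is incomplete as proposed; the fixed-metric Green's function route of the paper avoids both.
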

\begin{proof} By \cite[(2.6)]{NiTam2004}, if $G_\rho$ is the positive Green's function on $B_0(x_0,\rho)$ with Dirichlet boundary value, then
\be\label{e-F-bound-2}
\begin{split}
&\qquad\int_{B_0(x_0,\rho)}G_\rho(x_0,y)\lf(1-e^{F(y,t)} \ri)dv_0\\
&\le t\int_{B_0(x_0,\rho)}G_\rho(x_0,y)\mathcal{R}_0(y)dv_0+\int_0^t\int_{B_0(x_0,\rho)}G_\rho(x_0,y)\Delta_0 (-F(y,s)) dv_0ds\\
&=:I+II.
\end{split}
\ee
As in \cite[p.126]{NiTam2004},
\be\label{e-F-bound-3}
II\le -t\mathfrak{m}(\rho,x_0,t).
\ee
As in \cite[(2.8)]{NiTam2004} there is a constant $c_1$ depending only on $n$ such that
\bee
\begin{split}
\rho^2&\aint_{B_0(x_0,\frac15\rho)}(-F(y,t))dv_0\\
&\le c_1t(1-\mathfrak{m}(\rho,x_0,t))\lf(\int_{B_0(x_0,\rho)}G_\rho(x_0,y)\mathcal{R}_0(y)dv_0
-\mathfrak{m}(\rho,x_0,t)\ri).\end{split}
\eee
Using the fact that $\Delta_0(-F)\ge -\mathcal{R}_0$ and \cite[Lemma 2.1]{NiTam2004}, we obtain
\be\label{e-F-bound-4}
\begin{split}
-&F(x_0,t)\le\\
& \int_{B_0(x_0,\frac15\rho)} G_\rho(x_0,y)\mathcal{R}_0(y)dv_0\\&+
c_2\rho^{-2}t(1-\mathfrak{m}(\rho,x_0,t))\lf(\int_{B_0(x_0,\rho)}G_\rho(x_0,y)\mathcal{R}_0(y)dv_0
-\mathfrak{m}(\rho,x_0,t)\ri)
\end{split}
\ee
for some $c_2$ depending only on $n$. As in \cite[p.127]{NiTam2004}, we get the result.
\end{proof}

We also need the following:

\begin{lem}\label{l-limit-Un-5} Let $g(t)$ be the complete $U(n)$-invariant solution of the \KR flow \eqref{krf} with nonnegative bisectional curvature.
Let
$$
F(r,t):=F(z,t)=\log\lf(\frac{\det(g_\ijb(z,t))}{\det(g_\ijb(z,0))}\ri)
$$ where $r=|z|^2$. Then for $r\ge1$ and for all $t$
\bee
F(r,t)\ge -c-n\log r+F(1,t)
\eee
for some constant $c>0$ depending only on $g(0)$. If in addition the generating function $\xi$ of $g_0$ satisfies:
\be\label{l-limit-Un-5e1}
\lim_{r\to\infty}\int_1^r\frac{1-\xi(s)}sds=b<\infty,
\ee
then for $r\ge1$ and for all $t$
\bee
F(r,t)\ge -c+nF(1,t)
\eee
for some constant $c>0$ depending only on $g(0)$.
\end{lem}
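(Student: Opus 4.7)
The plan is to express $F(r,t)$ explicitly via the generating function $h$ and $\xi=-rh'/h$, compare $F(r,t)$ with $F(1,t)$ by radial integration, and then estimate the resulting two terms using the curvature hypothesis ($0\le \xi\le 1$) together with the equivalence $g(t)\sim g_0$ supplied by Corollary \ref{c-nonnegative-longtime}.

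Concretely, the Appendix formulas for $U(n)$-invariant \K metrics give $\ddet g_\ijb(r,t)=h(r,t)^n(1-\xi(r,t))$, hence
\bee
F(r,t)=n\log\frac{h(r,t)}{h(r,0)}+\log\frac{1-\xi(r,t)}{1-\xi(r,0)}.
\eee
Using $(\log h(\cdot,t))'(s)=-\xi(s,t)/s$, integrating the radial derivative of $F(\cdot,t)$ from $1$ to $r$ and subtracting the analogous expression at $t=0$ yields
\bee
F(r,t)-F(1,t)=n\int_1^r\frac{\xi(s,0)-\xi(s,t)}{s}\,ds+\log\frac{(1-\xi(r,t))(1-\xi(1,0))}{(1-\xi(1,t))(1-\xi(r,0))},
\eee
and I then bound the two terms on the right separately.

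For the integral, I use that nonnegative bisectional curvature is preserved by the $U(n)$-invariant \KR flow by Yang--Zheng \cite{YZ}, which in the appendix's notation forces $0\le \xi(\cdot,t)\le 1$ throughout. Hence $\xi(s,0)\ge 0$ and $\xi(s,t)\le 1$ give integrand $\ge -1/s$, so the integral is $\ge -n\log r$. In the cigar case, rewriting $\xi_0-\xi_t=(1-\xi_t)-(1-\xi_0)$ and using $1-\xi_t\ge 0$ together with hypothesis \eqref{l-limit-Un-5e1} improves this to the $r$-independent bound $-nb$. For the log term, Corollary \ref{c-nonnegative-longtime} supplies $c^{-1}g_0\le g(t)\le cg_0$; since at each radius the two distinct eigenvalues of $g$ are $h(r,\cdot)$ (multiplicity $n-1$) and $h(r,\cdot)(1-\xi(r,\cdot))$, the equivalence pins both $h(\cdot,t)/h(\cdot,0)$ and $(1-\xi(\cdot,t))/(1-\xi(\cdot,0))$ uniformly in $[c^{-2},c^2]$, so the log term is bounded by a constant $c'=c'(g_0)$.

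Combining yields $F(r,t)-F(1,t)\ge -n\log r-c$ in general, and the stronger $F(r,t)-F(1,t)\ge -c$ in the cigar case. Since nonneg scalar curvature gives $F(1,t)\le 0$ and therefore $(1-n)F(1,t)\ge 0$ for $n\ge 1$, the cigar bound further implies $F(r,t)\ge nF(1,t)-c$, which completes the proof. The main technical obstacle is matching the convention of the appendix for the determinant formula and the $\xi$-bound under nonneg bisectional curvature; once these are in place, the estimates above are routine.
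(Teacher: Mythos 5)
There is a genuine gap, in fact two. First, your explicit formula for the determinant is wrong under the convention of Theorem \ref{appendixthm}: there $g_{i\bar j}=f_\xi\delta_{ij}+f_\xi'\bar z_iz_j$ with $f_\xi(r)=\frac1r\int_0^r h_\xi$, so at radius $r$ the eigenvalues are $f$ with multiplicity $n-1$ and $(rf)'=h$ in the radial direction, whence $\det(g_\ijb)=h\,f^{n-1}$, not $h^n(1-\xi)$ (sanity checks: for $n=1$ the metric is $h$, not $h(1-\xi)$; and for general $U(n)$-invariant metrics $\xi$ may exceed $1$ while $\det g>0$). Consequently your identity $F(r,t)=n\log\frac{h(r,t)}{h(r,0)}+\log\frac{1-\xi(r,t)}{1-\xi(r,0)}$ and the decomposition built on it do not hold. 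With the correct formula the factor $f^{n-1}(r,t)/f^{n-1}(r,0)$ has to be controlled, and that is precisely where the paper's proof does its work (using $f(r,t)\ge f(1,t)/r$, and in the cigar case $f(r,t)/f(r,0)\ge c\,h(1,t)/h(1,0)$ together with $f(1,t)\le f(1,0)$); it is also why the second inequality of the lemma only reaches $nF(1,t)$ rather than the $F(1,t)$ your computation would give. Your treatment of the $h$-part (the bound $\int_1^r\frac{\xi(s,0)-\xi(s,t)}{s}ds\ge-\log r$, improved to $\ge -b$ under \eqref{l-limit-Un-5e1} using $0\le\xi(\cdot,t)\le1$) is the one piece that matches the paper, but it does not account for the $f$-terms.

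Second, the bound on your ``log term'' rests on reading Corollary \ref{c-nonnegative-longtime} as giving $c^{-1}g_0\le g(t)\le cg_0$ with $c$ depending only on $g_0$. That is not available: ``equivalent'' in Definition \ref{definition1} only provides such a constant on each compact time interval $[T_1,T_2]$, not uniformly in $t$ (indeed $h(0,t)$ may tend to $0$, as in the convergence section), and if a $t$-independent equivalence were known the lemma would be an immediate triviality, since $F$ itself would then be uniformly bounded below. Worse, the appeal is circular: this lemma is an ingredient in the proof of Theorem \ref{t-Un-longtime-2}(iii), from which Corollary \ref{c-nonnegative-longtime} is deduced. The paper's proof uses none of this; it only uses $0\le\xi(\cdot,t)\le1$ (hence $h,f\le1$ after normalizing), the monotonicity of the metric in $t$ coming from $\Ric\ge0$, and the elementary radial estimates $h(r,t)\ge h(1,t)/r$, $f(r,t)\ge f(1,t)/r$, plus the cigar hypothesis for the ratios $h(r,t)/h(r,0)$ and $f(r,t)/f(r,0)$. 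To repair your argument you would need to redo the decomposition with $\det g=h f^{n-1}$ and replace the equivalence step by estimates of this kind, with constants depending only on $g(0)$.
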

\begin{proof}    Consider the functions $\xi(r, t), h(r, t), f(r, t)$ corresponding $g(r, t)$.
  Then $0\leq\xi(r, t)\leq 1$ since $g(r,t)$ has non-negative bisectional curvature, and by Theorem \ref{appendixthm} we then get $0\leq h(r, t), f(r, t)\leq 1$.  Thus for $r\ge1$ we have
$$
f(r,t)=\frac1r\int_0^rh(s,t)ds\geq\frac1r \int_0^1h(s,t)dt= \frac1rf(1,t).
$$

$$
h(r ,t)=h(1,t)\exp\lf(\int_1^r-\frac{\xi(s)}sds\ri)\ge \frac1r h(1,t),
$$
and using the formula $\det(g_\ijb(r,t))=h(r,t)f^{n-1}(r,t)$ we then get

\bee
\begin{split}
\frac{\det(g_\ijb(r,t))}{\det(g_\ijb(r,0))}\ge&\det(g_\ijb(r,t)) \\
\ge &\frac1{r^n}h(1,t)f^{n-1}(1,t)\\
=&\frac1{r^n}\det(g_\ijb(1,t))\\
=&\frac1{r^n} \frac{\det(g_\ijb(1,t))}{\det(g_\ijb(1,0))}\cdot  \det(g_\ijb(1,0)).
\end{split}
\eee
From this, it is easy to see the first result follows.  Now suppose the generating function $\xi$ of $g_0$ also satisfies   \eqref{l-limit-Un-5e1}.  Then for $r\ge1$,
 \bee
 \begin{split}
 \frac{h(r,t)}{h(r,0)}=&\frac{h(1,t)\exp\lf(-\int_1^r\frac{\xi(s,t)}sds\ri)}{h(1,0)
 \exp\lf(-\int_1^r\frac{\xi(s,0)}sds\ri)}\\
 \ge&\frac{h(1,t)}{h(1,0)}\exp\lf(\int_1^r\frac{\xi(s,0)-1}sds\ri)\\
 \ge &c_1\frac{h(1,t)}{h(1,0)}
 \end{split}
 \eee
 for some constant $c_1>0$ independent of $r,t$, provided $r\ge1$.  Also for $r\ge1$
\bee
 \begin{split}
 \frac{f(r,t)}{f(r,0)}=&\frac{\int_0^rh(s,t)ds}{\int_0^rh(s,0)ds}\\
 = &\frac{\int_0^1h(s,t)ds+\int_1^rh(s,t)ds}{\int_0^1h(s,0)ds+\int_1^rh(s,0)ds}\\
 \ge&\frac{h(1,t)+ c_1\frac{h(1,t)}{h(1,0)}\int_1^rh(s,0)ds}{\int_0^1h(s,0)ds+\int_1^rh(s,0)ds}\\
 \ge&c_2\frac{h(1,t)}{h(1,0)}
 \end{split}
 \eee
for some constant $c_2>0$ independent of $r,t$, provided $r\ge1$.

Hence for $r\ge1$, at the point $|z|^2=r$
\bee
\begin{split}
\frac{\det(g_\ijb(z,t))}{\det(g_\ijb(z,0))}=&\frac{h(r,t)f^{n-1}(r,t)}{h(r,0)f^{n-1}(r,0)}\\
\ge&c_3\lf(\frac{h(1,t)}{h(1,0)}\ri)^n\\
\ge&c_3\lf(\frac{h(1,t)f^{n-1}(1,t)}{h(1,0)f^{n-1}(1,0)}\ri)^n\\
=&c_3\lf(\frac{\det(g_\ijb(1,t))}{\det(g_\ijb(1,0))}\ri)^n
\end{split}
\eee
for some $c_3>0$, where we have used the fact that $f(r,t)\le f(r,0)$.  From this the second result follows.
\end{proof}

\begin{proof}[Proof of Theorem \ref{t-Un-longtime-2} (iii)]  Let $g_0$ be generated by $\xi$ normalized so that $h(0)=1$  and so that  {\bf(c3)}  is satisfied.   We may assume that $\xi$ is not identically zero, otherwise $g_0$ would be the standard metric and the theorem is obviously true.

By \cite[Theoerm 5.4]{ChauLiTam} and \cite{YZ}, the \KR flow has a $U(n)$-invariant complete solution $g(t)$ on $M\times[0,T)$ which is equivalent to $g_0$ and has bounded non-negative bisectional curvature.  Hence we may assume that $g_0$ has bounded curvature.  Let $T$ be the maximal such existence time.

Now, let $\mathfrak{m}(t)=\inf_{z\in \C^n}F(z,t)$. By Lemma \ref{l-limit-Un-5} and the fact $F\le0$,

\bee
\mathfrak{m}(t)\ge -c_1+n\inf_{\{z\in\C^n|\ |z|^2\le1\}}F(z,t)
\eee
for some $c_1>0$ depending only on $g_0$, and so
Since $F\le0$.
\be\label{e-m-1}
  -\mathfrak{m}(t)\le  c_1-n\inf_{\{z\in \C^n, |z|^2\le 1\}}F(z,t).
\ee
On the other hand by the proof of \cite[Theorem 7]{WZ}, there is a constant $c_2>0$ such that for all $r>0$
\bee
\frac1{V_0(0,\rho)}\int_{B_0(0,\rho)}\mathcal{R}(0)\le \frac {c_2}{1+\rho}
\eee
where $B_0$ is the geodesic ball on with respect to $g(0)$, where $\mathcal{R}(0)$  is the scalar curvature of $g_0$. Hence by volume comparison, there is a constant $c_3>0$ such that
\bee
\frac1{V_0(z,\rho)}\int_{B_0(z,\rho)}\mathcal{R}(0)\le \frac {c_3}{1+\rho}
\eee
for all $r>0$ and for all $z$ with $|z|^2\le 1$.   By  Lemma \ref{l-F-bound}, \eqref{e-m-1}
\bee
-\mathfrak{m}(t)\le c_1+nc_4\lf[\lf(1+\frac{t(1-\mathfrak{m}(t))}{\rho^2}\ri)c_5\rho
-\frac{t\mathfrak{m}(t)(1-\mathfrak{m}(t))}
{\rho^2}\ri]
\eee
for some constants $c_4, c_5>0$ independent of $t, \rho$.
Choose $\rho^2=\frac1{2nc_1}t(1-\mathfrak{m}(t))$. Then one can conclude that for all $0<t<T<\infty$
\bee
\frac12(1-\mathfrak{m}(t) )\le c_6+ c_7T^\frac12(1-\mathfrak{m}(t))^\frac12
\eee
for some constants $c_6,  c_7>0$ independent of $T$.
 So $(1-\mathfrak{m}(t) )$ and hence $-\mathfrak{m}(t)$ is uniformly bounded on $[0,T)$.
Then one can proceed as in \cite{Shi2}  to conclude that the theorem is true.
\end{proof}

The following Theorem gives longtime existence under more general conditions, only in this case we do not know if the curvature of $g(t)$ is necessarily bounded for each time.

\begin{thm}\label{t-Un-longtime-2-2} Let $g_0$ be a complete $U(n)$-invariant \K metric on $\C^n$  generated by $\xi$.  Suppose
   \be\label{e-longtimecond-1}
   \lim_{r\to\infty}\int_1^r\frac{1-\xi(s)}sds=\infty.
   \ee
   Then the \KR flow \eqref{krf} with initial data $g_0$ has smooth complete long time $U(n)$-invariant solution $g(t)$.
   \end{thm}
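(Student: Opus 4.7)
The plan is to reduce the theorem to Theorem \ref{t-Un-longtime-2}(i) via an approximation argument. Given $g_0$ with generating function $\xi$ satisfying \eqref{e-longtimecond-1}, I would construct a sequence of complete $U(n)$-invariant \K metrics $g_0^{(k)}$ on $\C^n$, generated by functions $\xi_k$ each satisfying condition \textbf{(c1)}, with $g_0^{(k)}\to g_0$ smoothly on compact subsets. Concretely, for each $k\ge 1$, fix $R_k\to\infty$ and $\beta_k\in[0,1)$ (say $\beta_k=1/2$), and set $\xi_k=\xi$ on $[0,R_k]$, smoothly interpolate to $\beta_k$ on $[R_k,R_k+1]$, and $\xi_k\equiv\beta_k$ on $[R_k+1,\infty)$. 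Since $\xi$ is smooth and hence bounded on the compact interval $[0,R_k+1]$, each $\xi_k$ satisfies \textbf{(c1)} with $\alpha=0$, $\beta=\beta_k$, and some finite constant $\gamma_k$ (depending on $k$), and the corresponding metrics $g_0^{(k)}$ are complete. Theorem \ref{t-Un-longtime-2}(i) then supplies smooth longtime $U(n)$-invariant solutions $g_k(t)$ of the \KR flow on $\C^n\times[0,\infty)$ with $g_k(0)=g_0^{(k)}$.

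Next, I would extract a limit by deriving uniform local $C^\infty$ estimates on $\{g_k(t)\}$ on compact subsets of $\C^n\times[0,T]$ for every $T<\infty$. Using the $U(n)$-invariance, the flow reduces to a radial parabolic problem for $h_k(r,t)$ and $\xi_k(r,t)$; maximum and comparison principles applied to this radial equation, combined with the fact that $g_k(0)$ coincides with $g_0$ on the expanding sequence $\{|z|^2\le R_k\}$, should yield pointwise bounds on the radial quantities uniform in $k$ on any fixed compact set. Shi's local derivative estimates then promote these to $C^\infty$ bounds. A diagonal subsequence produces $g_{k_i}(t)\to g(t)$ smoothly on compact subsets of $\C^n\times(0,\infty)$. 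The limit $g(t)$ is a smooth $U(n)$-invariant solution of the \KR flow, and since $g_0^{(k)}\to g_0$ locally, we get $g(0)=g_0$.

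The hard part will be the uniform local estimates. The approximating data $g_0^{(k)}$ agree with $g_0$ only on $\{|z|^2\le R_k\}$ and may differ substantially beyond, so one must control how modifications at large radius influence the flow on a fixed compact set over a fixed time interval, independently of $k$. The divergence hypothesis \eqref{e-longtimecond-1} enters in two ways: it guarantees the radial truncation producing $\xi_k$ can be chosen compatibly with \textbf{(c1)} while preserving completeness and smoothness of $g_0^{(k)}$, and it provides the asymptotic control on $\xi$ itself that is needed to make the comparison arguments for the radial equation $k$-uniform on compact subsets. One subtlety to monitor is that we are not claiming bounded curvature for $g(t)$ here, which is consistent with the weaker conclusion of the theorem and avoids the non-existence obstruction in Proposition \ref{p-nonexitence}.
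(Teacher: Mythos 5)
Your approximation step is essentially the paper's (truncate $\xi$ to a constant $<1$ beyond a large radius, get longtime bounded-curvature solutions $g_k(t)$ from the already-proved cases), but the middle step is a genuine gap: you never supply a mechanism for the $k$-uniform estimates, and the mechanism you gesture at would not work. Agreement of $g_k(0)$ with $g_0$ on $\{|z|^2\le R_k\}$ plus ``maximum and comparison principles for the radial equation'' does not give bounds on fixed compact sets that are uniform in $k$ for fixed positive times: the Ricci flow on a complete noncompact manifold is not local in this sense, and the behavior of $g_k(t)$ on a fixed ball is governed by the metric near infinity, which is exactly where your approximants differ from $g_0$. Indeed the sensitivity of the flow to the structure at infinity is the whole difficulty of this theorem (compare Proposition \ref{p-nonexitence} and the cigar case), so ``should yield pointwise bounds uniform in $k$'' is precisely the assertion that needs proof, and nothing in your outline produces it.

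What the paper does instead, and what is missing from your proposal, is a global comparison against a fixed auxiliary metric: for each $\epsilon>0$ one takes a $U(n)$-invariant metric $\widehat g$ of bounded nonnegative bisectional curvature with $\widehat\xi\equiv 1$ for $r\ge 1$ and pulls it back by $\phi_{i_0}(z)=z/\sqrt{i_0}$; by Step~1 of the proof of Proposition \ref{s2p1}, the divergence hypothesis \eqref{e-longtimecond-1} guarantees that for $i_0$ large $\widehat g_{i_0}\le \epsilon\, g_0$ on all of $\C^n$, and because the truncated $\xi_k$ satisfy $\xi_k\le 1$ (in fact are constant $<1$) beyond $r_k$, one also gets $\widehat g_{i_0}\le 2\epsilon\, g_k(\cdot,0)$ for all large $k$, with the curvature bound $\widehat K$ of $\widehat g_{i_0}$ independent of $\epsilon$ and $k$ (this requires choosing the interpolation region and its length carefully, as in \eqref{t-Un-longtime-2-2e0}; your fixed-length interpolation on $[R_k,R_k+1]$ need not give uniform control if $\xi$ is unbounded). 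Then \cite[Lemma 3.1]{ChauLiTam} (the estimate behind Lemma \ref{l-existencetimeestimate-1}) gives the $k$-uniform lower bound $g_k(t)\ge\bigl(\tfrac1n-2n\widehat K\epsilon t\bigr)\widehat g_{i_0}$ on $[0,\tfrac1{4n^2\widehat K\epsilon})$, which is the uniform nondegeneracy needed to run the compactness argument of \cite[Theorem 4.2]{ChauLiTam} and pass to a limit solution on a time interval of length comparable to $1/\epsilon$; letting $\epsilon\to0$ and diagonalizing gives the longtime solution. Without this (or an equivalent) global lower-bound construction, your limit extraction does not go through.
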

   \begin{proof}

 By \eqref{e-longtimecond-1} we may choose some increasing sequence $r_k\to \infty$ such that $\xi(r_k)<1$.  For each $k$, let $\xi_k$ be smooth such that $\xi_k(r)=\xi(r)$ for $r\le r_k-\delta_k$ for some $0<\delta_k<1$  and $\xi_k(r)=\xi(r_k)$ for $r\ge r_k$ such that
 \be\label{t-Un-longtime-2-2e0}
 \exp\lf(\int_{r_k-\delta_k}^{r_k}\lf|\frac{\xi(s)-\xi_k(s)}{s}\ri|ds\ri)\le 2.
 \ee
 Let $g_k$ be the $U(n)$-invariant \K metric associated to $\xi_k$ and consider also the corresponding functions $h_k, f_k$. Then $g_k$ has bounded curvature. By Theorem \ref{t-Un-longtime-2} or by \cite[Theorem 5.5]{ChauLiTam}, for each $k$ the \KR flow has a longtime solution $g_k(t)$ on $\C^n\times[0, \infty)$ with bounded curvature.

 Now let $\epsilon>0$ be given and consider the sequence $\widehat{g}_i =\phi_i^* \widehat{g}$ as in the first part of the proof of Proposition \ref{s2p1}.  Then as in Step 1 there, we may fix $i_0$ sufficiently large so that

 \be\label{1}
 \widehat{g}_{i_0}(r) \leq \e g(r)
 \ee
 for all $r$.  In particular, for every $k$ we have \be\label{t-Un-longtime-2-2e1}\widehat{g}_{i_0}(r) \leq 2\e g_k(r)\ee for $r\leq r_k+\delta_k$ by \eqref{t-Un-longtime-2-2e0} and the fact that $g_k(r)=g(r)$ for $r\leq r_k$.  On the other hand, for some $k_0$ we have $\widehat{\xi}_{i_0}(r)=1$ for $r\geq r_{k_0}$ and thus for all $k\geq k_0$, $$\frac{\widehat{h}_{i_0}(r)}{h_k(r)}=\frac{\widehat{h}_{i_0}(r_k +\delta )}{h_k(r_k  +\delta )}\exp\lf(\int_{r_k+\delta_k}^{r}\frac{\xi_k(s)-1}{s}ds\ri)\leq 2 \e$$

 In particular, \eqref{t-Un-longtime-2-2e1} holds for all $r\in[0, \infty)$ while the curvature of $\widehat{g}_{i_0}$ is bounded by $\widehat{K}$.  By \cite[Lemma 3.1]{ChauLiTam}, we have
\be
g_k(r,t)\ge \lf(\frac1n-2n\widehat{K}\e t\ri)\widehat{g}_{i_0}(r)
\ee
on $\C^n \times[0, \frac{1}{4n^2\widehat{K}\e})$ for all $k\geq k_0$. Note that $g_k(x,0)\to g_0(x)$ uniformly on compact sets. By the proof of \cite[Theorem 4.2]{ChauLiTam}, $g_k(x,t)$ converges subsequentially uniformly on compact sets of $M\times[0,\frac{1}{4n^2\widehat{K}\e})$, to a smooth solution of \KR flow \eqref{krf}.
Since $\epsilon>0$ was arbitrarily chose, we conclude that the theorem is true by a diagonal process.
\end{proof}

\begin{rem}
If $\xi(r)\le 0$ near infinity in Theorem \ref{t-Un-longtime-2-2}, then the result follows from
 \cite[Th. 5.3]{ChauLiTam}.
\end{rem}

In the above theorem we do not assume that $g_0$ has bounded curvature. However, even if $g_0$ has bounded curvature the solution $g(t)$ in the above theorem may not have bounded curvature for $t>0$, unlike the result in Theorem \ref{t-Un-longtime-2}.

\section{Convergence}

By Corollary \ref{c-nonnegative-longtime}, for any $U(n)$-invariant complete \K metric $g_0$ with nonnegative bisectional curvature, the \KR flow with initial data $g_0$ has a longtime $U(n)$-invariant solution equivalent to $g_0$ with bounded nonnegative bisectional curvature.  In this section we discuss the convergence of such solutions.  Again, we will make extensive use of the notation and results in Theorem \ref{appendixthm} in the appendix on $U(n)$-invariant \K metrics $g$, their associated auxiliary functions $\xi, h_\xi, f_\xi$, and the associated components $A, B, C$ of the curvature tensor.  We also recall the notation in definition \ref{definition1}.


\begin{thm}\label{t-Un-limit-1} Let $g(t)$ be a complete longtime $U(n)$-invariant solution of the \KR flow \eqref{krf} with bounded non-negative bisectional curvature, and assume $g(0)$ also has bounded curvature. Then $g(t)$ converges, after rescaling at the origin, to the standard Euclidean metric on $\C^n$.
\end{thm}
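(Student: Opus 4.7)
The plan is to show that the rescaled metric $\tilde g(t):=g(t)/h(0,t)$ tends to the standard Euclidean metric $g_E$ smoothly on compact subsets of $\cn$, where $h(0,t)$ denotes the function $h$ from Theorem \ref{appendixthm} associated to $g(t)$ evaluated at $r=0$. By $U(n)$-invariance $h(0,t)=g(t)(V,V)$ for any Euclidean unit vector $V\in T_0\cn$, so this is precisely the rescaling in Definition \ref{definition1}. Since the generating function $\xi$ of $g(t)$ is invariant under scaling $g\mapsto c g$, and since
\bee
\frac{h(r,t)}{h(0,t)} = \exp\lf(-\int_0^r\frac{\xi(s,t)}{s}\,ds\ri), \qquad \frac{f(r,t)}{h(0,t)} = \frac{1}{r}\int_0^r\frac{h(s,t)}{h(0,t)}\,ds,
\eee
the convergence $\tilde g(t)\to g_E$ is equivalent to $\xi(r,t)\to 0$ uniformly on compact intervals $r\in[0,R_0]$.

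Next I would establish uniform curvature control on $\tilde g(t)$. Non-negativity of the Ricci form forces $\partial_t h(0,t) = -\mathcal{R}(0,t)/n\le 0$, so $h(0,t)\le h(0,0)$, and hence $|\Rm|_{\tilde g(t)} = h(0,t)|\Rm|_{g(t)}\le C$ uniformly on $\cn\times[0,\infty)$ by the hypothesis that $|\Rm|_{g(t)}$ is uniformly bounded. Integrating the same evolution also gives $\int_0^\infty \mathcal{R}(0,t)\,dt \le n h(0,0) < \infty$. Applying the Li--Yau--Hamilton--Cao Harnack inequality for the \KRF with bounded non-negative bisectional curvature, $t\mathcal{R}(0,t)$ is non-decreasing, which combined with integrability forces $t\mathcal{R}(0,t)\to 0$; hence $\mathcal{R}(0,t) = o(1/t)$. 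Using the spacetime form of the Harnack between $(z,t/2)$ and $(0,t)$, together with $d_{g(t/2)}(z,0)\le d_{g_0}(z,0)$ (from $g(t)\le g_0$ under non-negative Ricci), the distance term $d^2/t\to 0$ for $z$ in a compact set, so $\mathcal{R}(z,t/2)\le C(R_0)\mathcal{R}(0,t)\to 0$ uniformly for $|z|\le R_0$. Since $|\Rm|_{g(t)}\le C_n\mathcal{R}$ under non-negative bisectional curvature, $|\Rm|_{\tilde g(t)}\to 0$ uniformly on compact subsets of $\cn$.

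Finally, Shi's derivative estimates yield uniform bounds on all higher derivatives of $\tilde g(t)$, and $\tilde g(t)(V,V) = 1$ at the origin gives a uniform lower bound on the injectivity radius there. Hamilton--Cheeger--Gromov compactness then extracts a subsequential smooth limit $\tilde g(t_k)\to g_\infty$ on compact sets, which is a complete $U(n)$-invariant flat K\"ahler metric on $\cn$ with $g_\infty(V,V) = 1$ at the origin; the only such metric is $g_E$ (the generating function of a smooth flat $U(n)$-invariant K\"ahler metric on $\cn$ must have $\xi_\infty\equiv 0$). Since every subsequential limit equals $g_E$, full convergence $\tilde g(t)\to g_E$ follows.

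The principal obstacle is propagating the origin-decay $\mathcal{R}(0,t) = o(1/t)$ to compact subsets via the spacetime Harnack inequality; this requires uniformly bounding $d_{g(t/2)}(z,0)$ in $t$, which is possible here thanks to the monotonicity $g(t)\le g_0$ and the bounded-curvature hypothesis for $g_0$. Once curvature decay is in hand, the remaining compactness step and identification of the flat limit with $g_E$ via the generating-function characterization are routine.
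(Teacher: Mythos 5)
There is a genuine gap, in two places, and both are exactly where the real work of the paper lies. First, you read the hypothesis ``bounded non-negative bisectional curvature'' as a curvature bound that is uniform on $\cn\times[0,\infty)$; under Definition \ref{definition1} it only means a uniform bound on $M\times[T_1,T_2]$ for each compact time interval. So the statement $|\Rm|_{\tilde g(t)}=h(0,t)|\Rm|_{g(t)}\le C$ uniformly in $t$ is not available for free: obtaining a curvature bound on $D(R)\times[0,\infty)$ that is uniform in time is precisely the content of Lemma \ref{l-limit-Un-2}, whose proof needs the Wu--Zheng average curvature decay of $g(0)$, the Ni--Tam type estimate of Lemma \ref{l-F-bound}, the determinant comparison of Lemma \ref{l-limit-Un-5}, and the Li--Yau--Hamilton inequality. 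Your proposal assumes this conclusion rather than proving it, and without it neither your curvature-decay step nor your compactness step (nor the analogue of Lemma \ref{l-limit-Un-1}, which you also need to get uniform equivalence of $\tilde g(t)$ with $g_e$ on compact sets) goes through.

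Second, your central decay claim $\mathcal{R}(0,t)=o(1/t)$ is unjustified and in fact false in general. It rests on the mis-stated evolution $\partial_t h(0,t)=-\mathcal{R}(0,t)/n$; the correct equation is $\partial_t h(0,t)=-\tfrac{\mathcal{R}(0,t)}{n}h(0,t)$, so integrating only yields $\int_0^\infty \mathcal{R}(0,t)\,h(0,t)\,dt\le n\,h(0,0)$, and $\int_0^\infty\mathcal{R}(0,t)\,dt<\infty$ is equivalent to $h(0,t)$ staying bounded away from $0$, which need not hold. Indeed, combined with the LYH monotonicity of $t\mathcal{R}(0,t)$ your integrability claim would force $\mathcal{R}(0,t)\equiv 0$; and for cigar-type initial data (already for Hamilton's cigar when $n=1$) one has $\mathcal{R}(0,t)$ bounded below while $h(0,t)\to 0$ exponentially. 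The paper's proof is built around exactly this dichotomy: either $\mathcal{R}(0,t)\to 0$, in which case LYH propagates the decay to compact sets, or $\mathcal{R}(0,t_k)\ge c_1$ along a sequence, in which case the Ricci term at the origin forces $h(0,t)\to 0$, and then the uniform-in-time compact-set curvature bound of Lemma \ref{l-limit-Un-2} makes $h(0,t)|\Rm|_{g(t)}\to 0$ anyway. Your remaining steps (identification of the flat $U(n)$-invariant limit via $\xi_\infty\equiv 0$, and arbitrariness of the sequence) match the paper, but the two gaps above are the substantive part of the theorem and must be filled, essentially by reproducing Lemmas \ref{l-limit-Un-1}, \ref{l-limit-Un-2} and the two-case argument.
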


In order to prove the theorem, we need the following lemmas.

\begin{lem}\label{l-limit-Un-1}  Let $g(t)$ be as in Theorem \ref{t-Un-limit-1}.  Suppose the curvature of $g(t)$ is uniformly bounded by $c_1$, in $D(R)\times[0,\infty)$, where $D(R)=\{|z|^2<R\}$. Then there is a constant $c_2$ depending only on $c_1$ and  $R$ such that
 \be\label{e-metricbound-1}
 h(r,t)\le h(0,t)\le c_2 h(r,t);\ \  f(r,t)\le f(0,t)\le c_2 f(r,t)
 \ee
 for all $0<r<R$ and for all $t$.
 \end{lem}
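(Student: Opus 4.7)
The plan is to split the lemma into two directions: the easy upper bounds $h(r,t)\le h(0,t)$ and $f(r,t)\le f(0,t)$, and the harder reverse bounds $h(0,t)\le c_2 h(r,t)$ and $f(0,t)\le c_2 f(r,t)$.

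For the upper bounds, I would invoke Theorem \ref{appendixthm} together with the non-negativity of the bisectional curvature of $g(t)$, which (via the appendix's characterization) yields $0\le\xi(\cdot,t)\le 1$. The defining identity $\partial_r\log h(r,t)=-\xi(r,t)/r$ then shows $h(\cdot,t)$ is non-increasing in $r$, giving $h(r,t)\le h(0,t)$. Since $f(r,t)=\frac1r\int_0^r h(s,t)\,ds$ is the running mean of a non-increasing function, $f(\cdot,t)$ is itself non-increasing in $r$; combined with $f(0,t)=h(0,t)$ (coming from the fact that the metric is isotropic at the origin), this gives $f(r,t)\le f(0,t)$.

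For the reverse bounds, the key step is to convert the uniform curvature bound on $D(R)\times[0,\infty)$ into a uniform pointwise bound on $\xi(r,t)/r$ for $(r,t)\in[0,R)\times[0,\infty)$. By the formulas in Theorem \ref{appendixthm}, the curvature components $A,B,C$ of a $U(n)$-invariant K\"ahler metric at a point with $|z|^2=r$ are explicit in $\xi,\xi',h,f$, and in particular one of the bisectional components controls $\xi'/h$ (equivalently $\xi/r$ after using $\xi(0,t)=0$) up to factors that are uniformly bounded on $[0,R)$. Combining $|A|,|B|,|C|\le c_1$ with $0\le\xi\le 1$ and the smoothness condition $\xi(0,t)=0$, I expect to obtain a bound $\xi(r,t)\le c_3 r$ on $[0,R)\times[0,\infty)$, with $c_3$ depending only on $c_1$. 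Integrating:
\begin{equation*}
\log\frac{h(0,t)}{h(r,t)}=\int_0^r\frac{\xi(s,t)}{s}\,ds\le c_3 R,
\end{equation*}
so $h(0,t)\le e^{c_3 R}h(r,t)$. For the $f$ estimate, since $h(\cdot,t)$ is non-increasing we have $f(r,t)=\frac1r\int_0^r h(s,t)\,ds\ge h(r,t)$, and therefore $f(0,t)=h(0,t)\le e^{c_3 R}h(r,t)\le e^{c_3 R}f(r,t)$. We may thus take $c_2=e^{c_3 R}$.

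The main obstacle is step two: identifying in Theorem \ref{appendixthm} the precise curvature component whose boundedness forces $\xi(r,t)\le c_3 r$, and using non-negativity $\xi\ge 0$ to close the estimate without a priori information on $h$ or $f$. Once this is in hand, the remainder is a one-line integration in $r$, and the estimate is automatically uniform in $t$ since the curvature bound itself is uniform in $t$.
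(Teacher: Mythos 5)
Your easy directions and your reduction of the $f$-estimate to the $h$-estimate are fine (indeed, using $f(0,t)=h(0,t)$ and $f(r,t)\ge h(r,t)$ is a small simplification over the paper, which instead integrates the component $C=-2f_r/f^2$ to bound $f(0,t)/f(r,t)$ directly). But the step you yourself flag as the main obstacle is a genuine gap, and the way you hope to close it cannot work. From Theorem \ref{appendixthm} the only component that sees $\xi'$ pointwise is $A=\xi_r/h$, so a curvature bound gives $\xi_r(r,t)\le c_1 h(r,t)$; this does \emph{not} "equivalently" bound $\xi/r$, and no combination of $|A|,|B|,|C|\le c_1$ with $0\le\xi\le1$ and $\xi(0,t)=0$ can yield $\xi(r,t)\le c_3 r$ with $c_3$ depending only on $c_1$ "without a priori information on $h$": under the rescaling $g\mapsto\lambda g$ the function $\xi$ is unchanged while the curvature bound improves to $c_1/\lambda$, so any such implication free of a bound on $h$ would be scale-inconsistent. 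Some control of $h(r,t)$ from above, uniform in $t$, is indispensable.

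The missing idea, and the one the paper uses, is the time-monotonicity of the metric along the K\"ahler--Ricci flow: nonnegative bisectional curvature gives $\mathrm{Ric}\ge0$, so $\partial_t g_{i\bar j}=-R_{i\bar j}\le0$ and hence $h(r,t)\le h(r,0)$; combined with the spatial monotonicity $h(r,0)\le h(0,0)$ and the normalization $h(0,0)=1$ (in force throughout Section 4, and needed for $c_2$ to depend only on $c_1$ and $R$), this gives $h(r,t)\le1$, whence $\xi_r\le c_1$, $\xi(r,t)\le c_1 r$, and then your integration
\begin{equation*}
\log\frac{h(0,t)}{h(r,t)}=\int_0^r\frac{\xi(s,t)}{s}\,ds\le c_1R
\end{equation*}
closes the $h$-bound exactly as in the paper (the paper's $f$-bound likewise uses $f(0,t)\le f(0,0)=1$ from the same monotonicity). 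Once you insert this monotonicity-plus-normalization step, your argument is complete and essentially coincides with the paper's, apart from your alternative derivation of the $f$-estimate.
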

 \begin{proof} By Remark \ref{r-comparison}  and the fact that $g(t)$ has nonnegative bisectional curvature, we have $h(0,t)\ge h(r,t)$ and $f(0,t)\ge f(r,t)$ for all $r>0$.  On the other hand, we have $A(r,t), B(r,t), C(r,t)\le c_1$ in $D(R)\times[0,\infty)$ by hypothesis.  Thus $C=-\frac{2f_r}{f^2}$ gives
 $$
 \frac1{f(r,t)}-\frac{1}{f(0,t)}\le \frac {c_1R}2
 $$
 in $D(R)\times[0,\infty)$, and by $f(0,t)\le f(0,0)=h(0,0)=1$ we get
$$
 f(0,t)\le \lf(\frac {c_1R}2+1\ri) f(r,t).
 $$
Also, $A= \frac{\xi_r(r,t)}{h(r,t)}\le c_1$ gives
 $$
 \xi_r(r,t)\le c_1h(r,t)\le c_1h(r,0)\le c_1 h(0,0)=c_1
$$
in $D(R)\times[0,\infty)$, and thus $\xi(r,t)\le c_1r$, giving
$$
h(r,t)=h(0,t)\exp\lf(\int_0^r\frac{-\xi(s,t)}s ds\ri)\ge \exp(-c_1R)h(0,t).
$$
This completes the proof of the lemma.
 \end{proof}

 \begin{lem}\label{l-limit-Un-2} Let $g(t)$ be as in Theorem \ref{t-Un-limit-1}. Then for any $R>0$
 the curvature of $g(t)$ is uniformly bounded in $D(R)\times[0,\infty)$.
 \end{lem}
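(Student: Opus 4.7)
The plan is to establish a uniform two-sided equivalence of $g(t)$ with $g_0$ on $D(R)\times[0,\infty)$, and then convert this into a curvature bound. The upper bound is immediate: nonnegative Ricci curvature of $g(t)$ together with the flow equation $\p_t g_{\ijb}=-R_{\ijb}$ gives $g(t)\le g_0$ on all of $\C^n$, so in the $U(n)$-invariant description of Theorem \ref{appendixthm} one has $h(r,t)\le h(r,0)$ and $f(r,t)\le f(r,0)$; since $g_0$ is smooth with bounded curvature, both are bounded above on $D(R)$.

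The heart of the proof is a uniform lower bound on $F(x,t):=\log(\det g_{\ijb}(x,t)/\det g_{\ijb}(x,0))$ on $D(R)\times[0,\infty)$. Apply Lemma \ref{l-F-bound} at the origin $x_0=0$: since $g_0$ has bounded scalar curvature, $k(0,s)\le K_0$ and $\int_0^{2\rho} sk(0,s)\,ds\le K_0\rho^2$. The resulting estimate is self-referential in $\mathfrak{m}(\rho,0,t)=\inf_{B_0(0,\rho)} F$, which by $U(n)$-invariance depends only on the radial variable; Lemma \ref{l-limit-Un-5} (first part) then controls $\mathfrak{m}(\rho,0,t)$ in terms of $F(1,t)$ up to a logarithmic loss of order $-n\log(2\rho^2)$. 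Mimicking the bootstrap in the proof of Theorem \ref{t-Un-longtime-2}(iii), one chooses $\rho^2$ of order $t(1-\mathfrak{m})$ to absorb the linear-in-$t$ term and close the inequality, obtaining $-F(x,t)\le C(R)$ on $D(R)\times[0,\infty)$. Combined with the upper bounds on $h$ and $f$ and the identity $\det g_{\ijb}=h\cdot f^{n-1}$, this produces $h(r,t),f(r,t)\ge c(R)>0$ on $D(R)$, so $g(t)$ is uniformly equivalent to $g_0$ there.

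With the two-sided equivalence in hand, plus the pointwise integrability $\int_0^\infty \mathcal{R}(x,s)\,ds=-F(x,\infty)\le C(R)$ on $D(R)$, Shi's local derivative estimates---valid on each finite time slab by the bounded-curvature hypothesis of Theorem \ref{t-Un-limit-1}---preclude curvature concentration as $t\to\infty$: any spike $\mathcal{R}(x,t_0)\ge K$ must persist on a spacetime neighborhood of time-width of order $1/K$ by the derivative estimate, contributing at least a positive constant to $\int_0^\infty \mathcal{R}(x,s)\,ds$, which can occur only finitely often. The main obstacle is the closure of the $F$-bootstrap in the second step: because $g_0$ has only bounded (not decaying) scalar curvature, the $K_0\rho^2$ term in Lemma \ref{l-F-bound} cannot be absorbed by a single choice of $\rho$, so the balance of $\rho(t)$ against $(1-\mathfrak{m})$---possibly via iteration over shorter time intervals---must be managed with care.
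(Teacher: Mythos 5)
There is a genuine gap here, and you have in fact identified it yourself without resolving it: with only the \emph{boundedness} of the curvature of $g_0$, the term $\int_0^{2\rho}s\,k(x_0,s)\,ds$ in Lemma \ref{l-F-bound} is of size $\rho^2$ and the self-referential inequality for $\mathfrak{m}$ cannot be closed by any choice of $\rho$. The missing ingredient is the Wu--Zheng estimate (\cite{WZ}, Theorem 7): for a complete $U(n)$-invariant metric with nonnegative bisectional curvature the scalar curvature of $g_0$ decays \emph{on average}, $k(z,\rho)\le c_1/(1+\rho)$ uniformly for $|z|\le 1$, so that $K(\rho)=\int_0^{2\rho}s\,k(s)\,ds\le c\rho$ grows only linearly. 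One also needs the distance estimate $\rho(r)\ge c_2\log r$ for $r\ge 1$, so that the $-n\log r$ loss coming from Lemma \ref{l-limit-Un-5} is controlled linearly in $\rho$ (your ``logarithmic loss of order $-n\log(2\rho^2)$'' tacitly assumes a Euclidean-type relation between $r$ and distance, which fails for cigar-type metrics where $r$ is exponentially large in $\rho$). With these two inputs the choice $\rho^2\sim t\,(1-\mathfrak{m})$ does close the inequality, which is exactly the paper's argument.

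Even then, the conclusion of the bootstrap is only $-F(z,t)\le c(1+t)$, \emph{linear} in $t$; your intermediate goal of a time-uniform bound $-F\le C(R)$, equivalently $\int_0^\infty\mathcal{R}(z,s)\,ds<\infty$ on $D(R)$, is false in general: whenever $\mathcal{R}(0,t)\not\to 0$ one gets $h(0,t)\to 0$ geometrically (as in the proof of Theorem \ref{t-Un-limit-1}), hence $F(0,t)\to-\infty$. Consequently your endgame (finite total curvature integral plus a spike-persistence argument via Shi's estimates) rests on a false premise, and the persistence argument is in any case only heuristic as stated. The paper instead converts the linear-in-$t$ bound into a uniform curvature bound using Cao's Li--Yau--Hamilton inequality \cite{Cao-1992}: since $s\,\mathcal{R}(z,s)\ge t\,\mathcal{R}(z,t)$ for $s\ge t$, one has $t\,\mathcal{R}(z,t)\log 2\le\int_t^{2t}\mathcal{R}(z,s)\,ds\le -F(z,2t)\le c(1+t)$, whence $\mathcal{R}\le c_8$ on $D(R)\times[0,\infty)$. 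Your first step ($g(t)\le g_0$ and the resulting upper bounds on $h,f$) is correct but is not where the difficulty lies.
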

 \begin{proof}  Let
 $$
 k(z,\rho)=\frac{1}{V_0(z,\rho)}\int_{B_0(z,\rho)}\mathcal{R}(0)
 $$
 be the average of the scalar curvature $\mathcal{R}(0)$ of $g_0$ over the geodesic ball
 $B_0(z,\rho)$ with respect to $g_0$. Let
 $$
 k(\rho)=\sup_{|z|\le 1}k(z,\rho).
 $$
 By \cite[Theorem 7]{WZ}, there is a constant $c_1$ such that
 \be\label{e-curvaturebound-1}
 k(\rho)\le \frac{c_1}{1+\rho}.
 \ee
 Suppose $|z|^2=r$, then the distance $\rho(z)=\rho(r)$ from $z$ to the origin satisfies
 \be\label{e-curvaturebound-2}
 \rho(z)=\rho(r)=\frac12 \int_0^r\frac{\sqrt h}{\sqrt s}ds\ge c_2\log r
 \ee
 for some constant $c_2>0$ for all $r\ge 1$. Let

 $$
 F(z,t)=\log\frac{\det(g_\ijb(z,t))}{\det(g_\ijb(z,0))}
 $$
 and let
 $\mathfrak{m}(\rho, t)=\inf_{z\in\C^n, \rho(z)\le \rho}F(z,t)$.
 Fix $r_0>1$ and let $\rho_0=\rho(r_0)$. Denote $-\mathfrak{m}(\rho_0, t)$ by $\eta(t)$. By Lemmas \ref{l-F-bound} and   \ref{l-limit-Un-5}, there exist positive constants $c_3, c_4$ independent of $t$ and $\rho$ such that
 \bee
\begin{split}
 \eta(t) \le&c_4\lf[\lf(1+\frac{t(1-\mathfrak{m}(\rho+\rho_0,t))}{\rho^2}\ri)
K(\rho)-\frac{t\mathfrak{m}(\rho+\rho_0,t)(1-\mathfrak{m}(\rho+\rho_0,t))}{\rho^2} \ri]\\
\le &c_4\bigg[\lf(1+\frac{t(1+c_1+\log \widetilde r(\rho)+\eta(t))}{\rho^2}\ri)
K(\rho)\\
&+\frac{t(c_3+\log \widetilde r(\rho)+\eta(t))(1+c_1+\log \widetilde r(\rho)+\eta(t)}{\rho^2} \bigg]
\end{split}
\eee
where
$$
K(\rho)= \int_0^{2\rho}sk(s)ds
$$
and  $\widetilde r(\rho)$ is such that,
$$
\rho+\rho_0=\frac12\int_0^{\widetilde r(\rho)}\frac{\sqrt h}{\sqrt s} ds.
$$
By \eqref{e-curvaturebound-1} and \eqref{e-curvaturebound-2},  there is a constant $c_5$ independent of $\rho$ and $t$, such that

\bee
1+\eta(t)\le c_5\lf(\rho+\frac{t(1+\eta(t))}\rho+\frac{t(1+\eta(t))^2}{\rho^2}+t\ri).
\eee
Let $\rho^2=2c_5t(1+\eta(t))$, then
\bee
1+\eta(t)\le c_6\lf(t^\frac12(1+\eta(t))^\frac12+t\ri)
\eee
for some constant $c_6>0$ independent of $t$.
From this we conclude that $\eta(t)\le c_7(1+t)$ for some constant $c_7$ independent of $t$. Hence

$$
-F(z,t)\le c_7(1+t)
$$
for all $z$ with $|z|^2\le r_0$,
which implies
$$
 \int_t^{2t}\mathcal{R}(z,s)ds\le\int_0^{2t}\mathcal{R}(z,s)ds=-F(z,t)\le c_7(1+t).
 $$
 On the other hand, by the Li-Yau-Hamilton inequality \cite{Cao-1992}, $s \mathcal{R}(z,s)\ge t\mathcal{R}(z,t)$ for $s\ge t$. Hence we have
 $$
 \mathcal{R}(z,t)\le c_8
 $$
for all $t$ and for all $z$ with $|z|^2\le r_0$. This completes the proof of the lemma.
\end{proof}

\begin{proof}[Proof of Theorem \ref{t-Un-limit-1}]
Let $a(t)=h(0,t)$.  We claim that the curvature of $a(t)^{-1}g(z,t)$ converges to 0 uniformly on compact sets. Note that $a(t)^{-1}g(x,t)$ has nonnegative bisectional curvature. Let $\mathcal{R}(z,t)$ be the scalar curvature of $g(t)$ at $z\in \C^n$. Suppose first that $\lim_{t\to\infty}\mathcal{R}(0,t)=0$. Then by the Li-Yau-Hamilton inequality \cite{Cao-1992}, we conclude that $\lim_{t\to\infty}\mathcal{R}(z,t)=0$ uniform on compact sets. Since $a(t)\le a(0)=h(0,0)=1$, the claim is true in this case.

Suppose on the other hand that there exist $k\to\infty$ and $c_1>0$ such that $\mathcal{R}(0,t_k)\ge c_1$ for all $k$. We may assume that $t_{k+1}\ge t_k+1$. By the Li-Yau-Hamilton inequality again, there is $c_2>0$ such that $\mathcal{R}(0,t_k+s)\ge c_2$ for all $k$ and for all $0\le s\le 1$. Since the Ricci tensor of $g(t)$ at the origin is $\Ric=\frac{\mathcal{R}}ng$, using the \KR flow equation, we have
$$
h(0,  t_{k+1}) \le h(0,t_k+1)\le e^{-c_3}h(0,t_k)
$$
for some $c_3>0$ for all $k$. Hence $h(0,t_k)\to 0$ as $k\to\infty$. Since $h(0,t)$ is nonincreasing, we have $\lim_{t\to\infty}a(t)=\lim_{t\to\infty}h(0,t)=0$. On the other hand, the curvature of $g(t)$ is uniformly bounded on compact sets by Lemma \ref{l-limit-Un-2}. Thus our claim is true in this case as well.

 Consider any sequence $t_k \to \infty$.  Let $a_k=h(0,t_k)$ and let $\widetilde g_k(x,t)=\frac{1}{a_k }g(x, a_kt+t_k).$
 Then $\widetilde g_k(t)$ is a $U(n)$-invariant solution to the \KR flow on $\C^n\times[-\frac{t_k}{a_k}, \infty)$. Note that $-t_k/a_k\le -t_k$ because $a_k\le 1$. By Lemmas \ref{e-metricbound-1}, \ref{l-limit-Un-2}, for any $R>0$, $\widetilde g_k(x,0)$ is uniformly equivalent to the standard Euclidean metric $g_e$ on $D(R)$ (with respect to $k$). By the claim above and the Li-Yau-Hamilton inequality \cite{Cao-1992},  the curvature of the metrics $\widetilde{g}_k(x,t)$ approach zero uniformly  (with respect to  $k$)  on compact subsets of $\C^n\times(-\infty, 0]$.

   In particular, we conclude that $\widetilde g_k(t)$ is uniformly equivalent to $g_e$ in $D(R)$ provided $-1\le t\le 0$, and thus by \cite[Theorem 2.2]{ChauLiTam}, we have for any $m\ge0$, there is a $c_4$ depending on $R$ such that
 $$
 |\nabla_e^m\widetilde g_k(0)|\le c_4
 $$
 on $D(\frac R2)$, where $\nabla_e$ is the derivative with respect to the standard Euclidean metric.  From this it is easy to conclude the subsequence convergence of $\widetilde g_k(0)$ uniformly and smoothly on compact subsets of $\C^n$ to a flat $U(n)$-invariant \K  metric $g_\infty$, generated by some $\xi_\infty$ say.  Since the curvature is zero, we have $\xi'_\infty\equiv0$ and thus $\xi_\infty\equiv 0$. Moreover, at the origin $(g_\infty)_\ijb=\delta_{ij}$. Hence $h_\infty(0)=1$ which implies that $(g_\infty)_\ijb=\delta_{ij}$ everywhere.  From this the Theorem follows   as $t_k$ was chosen arbitrarily.

 \end{proof}

In some cases, we may remove the assumption that the metric is $U(n)$-invariant.

\begin{prop}\label{p-limit-1} Let $g_0$ be a $U(n)$-invariant complete \K metric on $\C^n$ with nonnegative bisectional curvature with maximum volume growth (i.e. $\xi\to a<1$). Let $G_0$ be another complete \K metric on $\C^n$ with bounded and nonnegative bisectional curvature. Suppose $G_0$ is uniformly equivalent to $g_0$. Let $G(t)$ be the long time solution of the \KR flow obtained in \cite{Shi2}. Then for any $t_k\to \infty$ and any fixed $v\in T^{1,0}(\C^n)$ at the origin with $v\neq 0$, there is a subsequence still denoted by $t_k$ such that
   $$
   \frac1{G(t_k)(v,\bar v)}G(t_k)
   $$
    will converge uniformly and smoothly on compact set to a complete flat metric on $\C^n$.

\end{prop}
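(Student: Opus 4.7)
The plan is to adapt the proof of Theorem~\ref{t-Un-limit-1} from the $U(n)$-invariant flow $g(t)$ of Corollary~\ref{c-nonnegative-longtime} to the general flow $G(t)$, using the hypothesis $G_0\sim g_0$ to transfer the structural estimates available for $g(t)$ over to $G(t)$. The first step is to establish that $G(t)\sim g(t)$ uniformly on $\C^n\times[0,\infty)$, with equivalence constants depending only on those of $G_0\sim g_0$; this uniform-in-time equivalence can be obtained via a maximum-principle comparison in the spirit of Theorem~\ref{t-uniqueness-2}, applied to the potential
\bee
w(x,t)=\int_0^t\log\frac{\det G(x,s)}{\det g(x,s)}\,ds,
\eee
whose complex Hessian recovers $g(t)-G(t)$, and using the bounded-curvature metric $g(1)$ to supply the required exhaustion $\zeta$.

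With this equivalence in hand, the next step is to show that for each $R>0$ the curvature of $G(t)$ is uniformly bounded on $D(R)\times[0,\infty)$, adapting Lemma~\ref{l-limit-Un-2}. Applying Lemma~\ref{l-F-bound} to $G(t)$ requires two inputs: (i) the decay $k_G(z,\rho)\le c/(1+\rho)$ on $\{|z|\le 1\}$ for the $G_0$-average of the scalar curvature of $G_0$, which follows from \cite[Theorem~7]{WZ} for $g_0$ via the volume and total-scalar-curvature comparisons coming from $G_0\sim g_0$; and (ii) a lower bound on $\mathfrak{m}_G(\rho,x_0,t)$, which follows from Lemma~\ref{l-limit-Un-5} applied to $g(t)$ together with the identity
\bee
F_G(y,t)-F_g(y,t)=\log\frac{\det G(y,t)}{\det g(y,t)}-\log\frac{\det G_0(y)}{\det g_0(y)},
\eee
whose right-hand side is uniformly bounded by Step~1. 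The Li--Yau--Hamilton inequality \cite{Cao-1992} then upgrades the integrated control from Lemma~\ref{l-F-bound} to a uniform pointwise bound on $\mathcal{R}_G$, which together with the preserved nonnegativity of the bisectional curvature gives a uniform bound on the full curvature tensor.

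Finally, set $a_k=G(t_k)(v,\bar v)$ and $\widetilde G_k=a_k^{-1}G(t_k)$, and argue as in the proof of Theorem~\ref{t-Un-limit-1}. A dichotomy on $\mathcal{R}_G(0,t)$ shows that $a_k\mathcal{R}_G(z,t_k)\to 0$ uniformly on compact sets: either $\mathcal{R}_G(0,t)\to 0$, in which case Li--Yau--Hamilton propagates this to compact sets while $a_k\le G_0(v,\bar v)$ remains bounded because $\Ric_G\ge 0$ forces $G(t)(v,\bar v)$ to be nonincreasing; or $\mathcal{R}_G(0,t_k)\ge c>0$ along a subsequence, in which case Step~1 together with Theorem~\ref{t-Un-limit-1} applied to $g(t)$ forces $a_k\sim h(0,t_k)|v|_e^2\to 0$, so the uniform curvature bound from Step~2 again yields $a_k\mathcal{R}_G\to 0$. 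Steps~1 and~2 together make $\widetilde G_k$ uniformly equivalent to $g_e$ on compact sets, and \cite[Theorem~2.2]{ChauLiTam} supplies uniform higher-order derivative bounds, so a subsequence of $\widetilde G_k$ converges smoothly on compact subsets of $\C^n$ to a flat K\"ahler metric $\widetilde G_\infty$; its global uniform equivalence to $g_e$ guarantees completeness.

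The main obstacle I expect is Step~1: proving the global-in-time equivalence $G(t)\sim g(t)$ with constants independent of $t$. Local-in-time equivalence on any finite interval $[0,T]$ is standard, but extending the bound uniformly to $[0,\infty)$ forces one to run a noncompact maximum-principle comparison of the type in Theorem~\ref{t-uniqueness-2} with \emph{different} initial data, carefully tracking the boundary contribution at $t=0$ so that the final equivalence constants do not degenerate as $t\to\infty$.
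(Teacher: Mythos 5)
Your overall strategy (transfer everything from the $U(n)$-invariant flow $g(t)$ to $G(t)$ via a uniform-in-time equivalence) is the right idea, but the way you propose to obtain that equivalence is a genuine gap, and it is precisely the step where the paper's proof has content you are missing. The paper proves the equivalence $c\,g(t)\le G(t)\le c^{-1}g(t)$ for all $t>0$ (Lemma \ref{l-limit-comparison-1}) not by a maximum-principle comparison of potentials, but by a scale-invariant argument that uses three ingredients specific to this setting: (1) nonnegative Ricci curvature makes both flows pointwise nonincreasing, so $G(0)\ge c_2 g(t)$ and $g(0)\ge c_2 G(t)$ for free; (2) because $g_0$ (hence $G_0$) has maximal volume growth and bounded nonnegative bisectional curvature, Ni's average quadratic curvature decay plus Shi's theorem give $t\,\mathcal{R}_g(x,t),\,t\,\mathcal{R}_G(x,t)\le c_1$ for all $t$; (3) \cite[Lemma 3.1]{ChauLiTam} applied with reference metric $g(t_0)$, whose curvature is bounded by $c_3 t_0^{-1}$, propagates the comparison forward by a time comparable to $t_0$, and the bound $t\mathcal{R}\le c_1$ then relates $G(t_0)$ to $G(t_0/(2nc_4))$ by a fixed factor. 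This is what makes the constants independent of $t_0$. Your proposed route, a comparison in the spirit of Theorem \ref{t-uniqueness-2} applied to $w(x,t)=\int_0^t\log\frac{\det G}{\det g}\,ds$ with \emph{different} initial data, cannot do this job: the maximum principle there controls the scalar quantity $w$, whose complex Hessian equals $G(t)-g(t)+g_0-G_0$, and a bound on $w$ (or even on $\partial\bar\partial w$) does not yield two-sided metric equivalence; moreover any constants produced this way degenerate as $t\to\infty$, exactly the obstacle you flag but do not resolve.

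Two further points. First, once you have Lemma \ref{l-limit-comparison-1}, your Step 2 is unnecessary: by Shi's results the curvature of $G(t)$ is already uniformly bounded on $\C^n\times[0,\infty)$ (indeed decays like $c_1/t$), so there is no need to adapt Lemma \ref{l-F-bound} and Lemma \ref{l-limit-Un-5} to $G$ (the latter adaptation would anyway be circular, since it presupposes Step 1). Second, the second branch of your dichotomy in Step 3 does not work as written: from $\mathcal{R}_G(0,t_k)\ge c>0$ you cannot conclude $h(0,t_k)\to 0$, because uniform equivalence of metrics transfers no curvature information from $G(t)$ to $g(t)$, and for a non-invariant metric the Ricci tensor at the origin is not a multiple of the metric, so the geometric decay argument of Theorem \ref{t-Un-limit-1} does not apply to $G(t)(v,\bar v)$ either. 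In the paper's setting this case is vacuous, since $\mathcal{R}_G(0,t_k)\le c_1/t_k\to 0$ by Shi's estimate, and the curvature of the rescaled metrics $a_k^{-1}G(t_k)$ tends to zero automatically because $a_k\le G_0(v,\bar v)$; with the equivalence of $\frac{1}{g(t_k)(v,\bar v)}g(t_k)$ to $g_e$ on compact sets from Theorem \ref{t-Un-limit-1} and the local derivative estimates of \cite[Theorem 2.2]{ChauLiTam}, the convergence then proceeds as you describe.
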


\begin{proof}
By the proof of Theorem \ref{t-Un-limit-1}, for and $r>0$, there is a constant $c_r$ such that
   $$
    c_r^{-1} g_e\le \frac1{g(t_k)(v,\bar v)}g(t_k)\le c_r g_e
    $$
    on $|z|^2<r$, where $g_e$ is the standard metric on $\C^n$.  By Lemma \ref{l-limit-comparison-1}, we see that
   $$
    (c_1c_r)^{-1} g_e\le \frac1{G(t_k)(v,\bar v)}G(t_k)\le c_1c_r g_e
    $$
    on $|z|^2<r$, for some $c_1>0$ independent of $k$ and $r$.   Also, by the results in \cite{Shi2}, the curvature of $G(t)$ is uniformly bounded on $\C^n \times [0, \infty)$. The argument for convergence is now similar to that in the proof of Theorem \ref{t-Un-limit-1}.

\end{proof}

\begin{lem}\label{l-limit-comparison-1} Let $g_0$ and $G_0$ be complete \K metrics on $M$ with bounded nonnegative bisectional curvature and maximum volume growth.  Let $g(t), G(t)$ be the long time solutions of the \KR flow \eqref{krf} with initial data $g_0, G_0$ respectively as obtained in \cite{Shi2}.  If $g_0$ and $G_0$ are equivalent, then there exist $c>0$ such that for all $t>0$
  $$
  c g(t)\le G(t)\le c^{-1} g(t).
  $$

  \end{lem}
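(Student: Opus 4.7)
My plan is to propagate the initial equivalence to all times via a parabolic maximum principle applied to the scalar traces
\[ \sigma(x,t) := \mathrm{tr}_{g(t)} G(t), \qquad \tau(x,t) := \mathrm{tr}_{G(t)} g(t). \]
The hypothesis $c_0 g_0 \le G_0 \le c_0^{-1} g_0$ gives $\sigma(\cdot,0), \tau(\cdot,0) \le n c_0^{-1}$, and uniform-in-$(x,t)$ upper bounds on $\sigma$ and $\tau$ are equivalent to the conclusion of the lemma. In particular, an upper bound on $\sigma$ will yield $G(t)\le C g(t)$ and an upper bound on $\tau$ the reverse inequality.

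First I would derive an evolution inequality for $\sigma$. In a $g$-unitary frame that also diagonalises $G$ at a chosen point, using $\partial_t g^{i\bar j} = R(g)^{i\bar j}$, $\partial_t G_{i\bar j} = -R(G)_{i\bar j}$, and the commutation formula for covariant derivatives of $g$ applied to the Hermitian $(1,1)$-tensor $G$, a direct computation produces an inequality of the form
\[ \left(\frac{\partial}{\partial t} - \Delta_g\right) \sigma \le K\,\sigma \]
on $M \times [0,\infty)$, where $K$ depends only on the uniform curvature bound of $g(t), G(t)$ provided by Shi's estimates \cite{Shi2}. The key positive ingredient is the non-negativity of the bisectional curvature of $g(t)$ (preserved along the flow by Mok's theorem), which supplies the sign needed to absorb the Ricci coupling between the two flows into a multiple of $\sigma$ itself. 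The analogous inequality for $\tau$ uses instead the non-negative bisectional curvature of $G(t)$.

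I would then close the argument via a maximum principle on the non-compact $M$, mirroring the barrier scheme used in the proof of Theorem \ref{t-uniqueness-2}. Setting $\widehat g := g(1)$, which has bounded curvature by Shi, one obtains an exhaustion $\zeta > 0$ with $|\partial\bar\partial \zeta|_{\widehat g}$ and $|\widehat\nabla \zeta|_{\widehat g}$ bounded; the barrier $\varepsilon\, e^{Kt}\log\zeta$ then dominates $\sigma$ near spatial infinity, and letting $\varepsilon\to 0$ yields a bound for $\sigma$ (and by the symmetric argument, for $\tau$) on $M\times[0,T]$ for each $T$. The main obstacle is to upgrade these exponential-in-$t$ bounds to bounds uniform in $t \in [0,\infty)$, as the statement requires. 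To do this one must replace $K$ by a time-dependent $K(t)$ whose integral over $[0,\infty)$ is finite; the natural source of the required decay is the Li-Yau-Hamilton inequality of \cite{Cao-1992}, combined with an averaged curvature estimate in the spirit of Lemma \ref{l-limit-Un-2}, exploiting the maximum-volume-growth hypothesis to force a sufficiently rapid time-decay of the curvature coupling. This integrability step is the technical heart of the proof.
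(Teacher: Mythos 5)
There is a genuine gap, and it is exactly where you flag it: the passage from an exponential-in-$t$ bound to a bound uniform in $t\in[0,\infty)$, which is the whole content of the lemma. Your evolution inequality $(\partial_t-\Delta_g)\sigma\le K\sigma$ (or, more realistically after a Chern--Lu type computation, an inequality for $\log\tr_g G$ with a term $B\,\tr_g G$ where $B$ bounds the bisectional curvature of the target) combined with a barrier maximum principle only yields $\sigma\le C e^{Kt}$ on each $[0,T]$. Your proposed repair is to make $K(t)$ integrable in time using the Li--Yau--Hamilton inequality and an averaged decay estimate; but under the hypotheses at hand the best available decay is Shi's estimate $t\,\mathcal{R}\le c_1$, i.e.\ $K(t)\sim c/t$, which is \emph{not} integrable: Gr\"onwall then gives $\sigma\lesssim t^{c}$, still unbounded. (Also note that Lemma \ref{l-limit-Un-2} is proved only for $U(n)$-invariant solutions, while $G_0$ here is not assumed $U(n)$-invariant, so it cannot be invoked for $G(t)$.) So the "technical heart" is not merely unwritten; the mechanism you name does not supply the needed integrability, and some different idea is required.

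The paper closes this by a scale-by-scale comparison with no maximum principle at all. Since both flows have nonnegative Ricci curvature, the metrics are nonincreasing in $t$, so $G(0)\ge c_2 g_0\ge c_2\,g(t_0)$ for every fixed $t_0$. By Shi's estimate the bisectional curvature of $g(t_0)$ is bounded above by $c_3 t_0^{-1}$, so the lower-bound lemma \cite[Lemma 3.1]{ChauLiTam} applied to the flow $G(t)$ against the fixed reference metric $g(t_0)$ gives $G(t)\ge c_2\lf(\frac1n-c_4 t_0^{-1}t\ri)g(t_0)$, hence $G\lf(\frac{t_0}{2nc_4}\ri)\ge \frac{c_2}{2n}\,g(t_0)$. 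Finally, the estimate $t\,\mathcal{R}_G\le c_1$ controls $\partial_t\log G$ by $c_1/t$, and integrating from $\frac{t_0}{2nc_4}$ to $t_0$ costs only the fixed factor $e^{-c_1\log(2nc_4)}$, so $G(t_0)\ge c_6\,g(t_0)$ with $c_6$ independent of $t_0$; the reverse inequality is symmetric. The crucial point your approach misses is this scale invariance: the curvature decay $\sim 1/t_0$ is exactly matched by comparing times $t_0/(2nc_4)$ and $t_0$, so the non-integrability of $1/t$ never enters, whereas any Gr\"onwall-in-time argument is forced to pay for it.
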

  \begin{proof} Let $\mathcal{R}_g(x,t)$, and $\mathcal{R}_G(x,t)$ be the scalar curvatures of $g(t)$ and $G(t)$ at $x$ respectively. By a result of Ni, $\mathcal{R}_g(\cdot,0)$ and $\mathcal{R}_G(\cdot,0)$ decay like $r^{-2}$ on average uniformly at all points, and by \cite{Shi2}, $g(t)$ and $G(t)$ exist for all time and there is $c_1>0$ such that for all $x, t$
  \be\label{e-limit-comparison-1}
  t\mathcal{R}_g(x,t),\  t\mathcal{R}_G(x,t)\le c_1.
  \ee
  By assumption, there is $c_2>0$ such that
  $$
  c_2g_0\le G(0)\le c_2^{-1}g_0.
  $$
 Since $g(t)$ and $G(t)$ is nonincreasing, we have
 $$
 G(0)\ge c_2 g(t),\  g(0)\ge c_2 G(t),
 $$
 for all $t$. Fix $t_0>0$. The bisectional curvature of $g(t_0)$, $G(t_0)$ are   bounded above by $c_3t_0^{-1}$ for some $c_3$ independent of $t_0$. By \cite[Lemma 3.1]{ChauLiTam}, there exists a constant $c_4$ independent of $t_0$, $t$ such that for all $t>0$,
 $$
 G(t)\ge c_2\lf(\frac 1n-c_4t_0^{-1}t\ri)g(t_0),
 $$
 and hence
 $$
 G(\frac1{2nc_4}t_0)\ge \frac{c_2}{2n}g(t_0).
 $$
 On the other hand, by the \KR flow equation and \eqref{e-limit-comparison-1}, we have
 $$
 G(t_0)\ge c_5G(\frac1{2nc_4}t_0)
 $$
 for some $c_5>0$ independent of $t_0$. Hence we have
 $$
 G(t_0)\ge c_6 g(t_0)
 $$
  for some $c_6>0$ independent of $t_0$. Similarly, one can also prove that $g(t_0)\ge cG(t_0)$ for some $c>0$ independent of $t_0$. From this the lemma follows.

   \end{proof}




\appendix
\section{}
In this appendix, we collect some basic facts for $U(n)$-invariant \K metrics on $\C^n$.

\begin{thm}\label{appendixthm} \begin{enumerate}
            \item [(a)]     \text {\sc ([Wu-Zheng]  \cite{WZ})} Every smooth $U(n)$ invariant \K metric $g$ is generated by a function $\xi: [0, \infty)\to \R$ with
$\xi(0)=0$ such that if

 \bee
 h_\xi(r) :=Ce^{{\int^r_{0}-\frac{\xi(s)}{s}ds}}; \hspace{10pt}
f_\xi(r) :=\frac{1}{r}\int^r_{0}h_\xi(s)ds
 \eee
  where $h_\xi(0)=C>0$ and $f_\xi(0)=h_\xi(0)$, where $r=|z|^2$, then
 \bee
 g_{i\bar{j}}=f_\xi(r)\delta_{ij}+f_\xi'(r)\overline{z_i}z_j.
\eee
where $g_{i\bar{j}}$ are the components of $g$ in the standard coordinates
$z=(z_1,\dots,z_n)$ on $\mathbb{C}^n$. Moreover $g$ is complete if and only if

\bee
\int_0^\infty \frac{\sqrt h_\xi(s)}{\sqrt s}ds=\infty.
\eee
              \item[(b)]\text {\sc ([Wu-Zheng]  \cite{WZ})} Let $h=h_\xi$, $f=f_\xi$.  At the point $z=(z_1,0,\dots,0)$, relative to the orthonormal frame $
e_1=\frac{1}{\sqrt{h}}\p_{z_1}, e_i=\frac{1}{\sqrt{f}}\p_{z_i}, i\ge 2$, with respect to $g$,  the curvature tensors $ A=R_{1\bar{1}1\bar{1}}=\frac{\xi'}{h}$,
  $B=R_{1\bar{1}i\bar{i}}=\frac{1}{(rf(r))^2}\int_0^r \xi'(s) \lf(\int_0^t
G(s)ds\ri) dt $, $C=R_{i\bar{i}i\bar{i}}=2R_{i\bar{i}j\bar{j}}=\frac{2}{(rf(r))^2}\int_0^r G(s)\xi(s)dt$,
 where $2\leq i\neq j\leq n$ and these are the only non-zero components of the
curvature tensor at $z$ except those obtained from $A, B$ or $C$ by the symmetric properties of $R$.
              \item[(c)] \text {\sc([Wu-Zheng]  \cite{WZ}, Yang \cite{Y})} $g$ has positive (nonnegative) bisectional curvature if and only if $\xi'>0$ ($\xi'\ge0$). In particular,   $g$ has nonnegative bisectional curvature and complete, then $\xi\le 1$.
                  \item[(d)] \text{\rm (\cite{ChauLiTam})} A complete $U(n)$-invariant \K metric generated by $\xi$ on $\C^n$ has bounded curvature if and only if $|\frac{\xi'}{h_\xi}|$ is uniformly bounded.
            \end{enumerate}
\end{thm}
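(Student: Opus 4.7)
The plan is to reduce everything to ODE analysis of a single radial function extracted from a Kähler potential.

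For part (a), I would start from the fact that $U(n)$-invariance together with the $\p\bar\p$-lemma produces a radial potential $u(r)$ for $r = |z|^2$; differentiating gives $g_{i\bar j} = u'(r)\delta_{ij} + u''(r)\overline{z_i}z_j$. Setting $f := u'$, positive-definiteness forces $f > 0$ and $h := (rf)' = f + rf' > 0$ (the eigenvalues of $g$ at $(z_1,0,\ldots,0)$ being $h$ in the radial slot and $f$ in the remaining slots). The generating function is then defined by $\xi(r) := -rh'(r)/h(r)$, whose exponentiation recovers $h(r) = h(0)\exp\!\lf(-\int_0^r \xi(s)/s\,ds\ri)$, while $f(r) = \frac{1}{r}\int_0^r h(s)\,ds$ comes from inverting $h = (rf)'$ using $f(0) = h(0)$. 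Smoothness of $u$ at the origin forces $\xi(0) = 0$, and completeness reduces to divergence of the radial arc-length $\int_0^\infty \tfrac{1}{2}\sqrt{h(r)/r}\,dr$.

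For part (b), I would diagonalize $g$ at $(z_1,0,\ldots,0)$ with entries $h$ (in the first slot) and $f$ (in the rest) and use the Kähler curvature formula $R_{i\bar j k\bar l} = -\p_i\p_{\bar j} g_{k\bar l} + g^{p\bar q}\p_i g_{k\bar q}\p_{\bar j} g_{p\bar l}$. The $U(n)$-symmetry forces all off-diagonal components to vanish; the surviving three are $A = \xi'/h$, $B$, and $C$, whose integral forms follow by substituting the identity $f' = (h-f)/r$ and integrating by parts twice with the kernel $G(s)$ to isolate $\xi'(s)$ and $\xi(s)$. Parts (c) and (d) are then essentially corollaries of (b). In (c), since $h > 0$ we have $A \geq 0 \iff \xi' \geq 0$, and once $\xi' \geq 0$ both $B$ and $C$ are non-negative as integrals of non-negative quantities; the bound $\xi \leq 1$ when $g$ is complete with $\xi' \geq 0$ follows by contradiction, since $\xi(r_0) > 1$ plus monotonicity would give $h(r) = O(r^{-(1+\e)})$ for large $r$ and make $\int^\infty \sqrt{h/s}\,ds$ converge, contradicting completeness. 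For (d), the formulas in (b) show $|A|, |B|, |C|$ are controlled by $\sup_r |\xi'/h|$ (for $B$ and $C$ via the weighted-average structure produced by the integration by parts), while conversely $A = \xi'/h$ gives the other direction.

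The main obstacle is the bookkeeping in part (b): one must track normalization constants at the origin (where $f(0)=h(0)=C$) and carry out the integration-by-parts identities in precisely the form stated so that the kernel $G(s)$ and the factors of $rf$ come out correctly. Since all four statements appear essentially as stated in \cite{WZ,Y,ChauLiTam}, the proof would largely amount to importing and verifying those calculations, with the only subtlety being the completeness-to-$\xi\leq 1$ step in (c), which is an ODE-tail argument rather than a curvature computation.
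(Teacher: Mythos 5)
The paper offers no proof of this theorem: the appendix simply collects these facts with citations to Wu--Zheng, Yang, and Chau--Li--Tam, which is exactly what your plan amounts to. Your outline (radial potential giving $g_{i\bar j}=f\delta_{ij}+f'\bar z_iz_j$, the generating function $\xi=-rh'/h$ with $h=(rf)'$, completeness as divergence of $\int_0^\infty\sqrt{h/s}\,ds$, the Wu--Zheng components $A,B,C$ plus the quadratic-form check that their nonnegativity gives nonnegative bisectional curvature, the ODE-tail contradiction for $\xi\le 1$, and bounding $B,C$ by $\sup|\xi'/h|$ using $\int_0^r h(s)\bigl(\int_0^s h\bigr)ds=\tfrac12 (rf)^2$) is the same standard route taken in those cited sources and is correct in outline.
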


 \begin{rem}  By the proof of \cite{WZ,Y}, it is easy to see that similar to (b) in the above theorem, $g$ has nonpositive bisectional curvature if and only if $\xi'\le 0$.
\end{rem}
 \begin{rem}\label{r-comparison} If $g_1$ and $g_2$ are two smooth $U(n)$ invariant \K metrics on $\C^n$ generated by $\xi_1, \xi_2$ respectively, and if the corresponding functions $h_{\xi_1}$, $h_{\xi_2}$ satisfy  $h_{\xi_1}\ge h_{\xi_2}$, then $g_1\ge g_2$. Conversely, if $g_1\ge g_2$, then $h_{\xi_1}\ge h_{\xi_2}$. This can be seen by comparing the metrics at the points $(a,0,\dots,0)$.

\end{rem}

 \end{document}